\newtheorem{theorem}{Theorem}[section]
\newtheorem{proposition}[theorem]{Proposition}
\newtheorem{lemma}[theorem]{Lemma}
\theoremstyle{remark}
\numberwithin{equation}{section}
\begin{document}

\title[Q-fundamental surfaces in lens spaces] {
Q-fundamental surfaces in lens spaces}

\author{Miwa Iwakura and Chuichiro Hayashi}

\date{\today}

\thanks{The last author is partially supported
by Grant-in-Aid for Scientific Research (No. 18540100),
Ministry of Education, Science, Sports and Technology, Japan.}

\begin{abstract}
 We determine all the Q-fundamental surfaces in $(p,1)$-lens spaces
and $(p,2)$-lens spaces 
with respect to natural triangulations with $p$ tetrahedra.
 For general $(p,q)$-lens spaces,
we give an upper bound for elements of vectors 
which represent Q-fundamental surfaces
with no quadrilateral normal disks 
disjoint from the core circles of lens spaces.
 We also give some examples of non-orientable closed surfaces 
which are Q-fundamental surfaces
with such quadrilateral normal disks.\\
\\
{\it Mathematics Subject Classification 2000:}$\ $ 57N10.\\
{\it Keywords:}$\ $
normal surface, fundamental surface, vertex surface, Q-theory, lens space.
\end{abstract}

\maketitle


\section{Introduction}
 In \cite{K}, K. Kneser introduced normal surfaces
which are in $\lq\lq$beautiful" position
with respect to a triangulation of a $3$-manifold,
to show the existence of prime decomposition.
 A normal surface intersects each tetrahedron of a triangulation
in a disjoint union of trigons and quadrilaterals.

 In \cite{H}, 
W. Haken found that normal surfaces 
correspond to non-negative solutions 
of a certain system of simultaneous linear equations with integer coefficients,
called the matching equations.
 He introduced fundamental surfaces, 
to obtain an algorithm to decide if a given knot is trivial or not.
 The set of fundamental solutions forms the Hilbert basis
of the space of non-negative integer solutions.
 There are only finite number of fundamental solutions for each system.
 Since almost all sorts of important surfaces,
such as essential spheres, essential tori, essential disks, essential annuli, 
knot spanning surfaces with minimal Euler characteristics and so on, 
can be deformed to fundamental surfaces, 
we obtain principal algorithms for $3$-manifold topology.
 See, for example, \cite{JT} and \cite{S}.
 The theory of fundamental surfaces was also applied to problems 
other than algorithms.
 See \cite{HL} and \cite{L}, for example.

 There is an algorithm
which determines all the fundamental surfaces
for a given triangulation of a $3$-manifold.
 However, it is not practical 
because the numbers of variables and equations
of the matching equations are very large
even for simplest $3$-manifolds.
 Variables of the matching equations 
correspond to types of normal disks.
 There are $4$ types of trigonal normal disks 
and $3$ types of quadrilateral normal disks
in each tetrahedron as shown in Figure \ref{fig:normaldisks}.
 Hence the number of variables is $7t$
if a triangulation has $t$ tetrahedra.
 When a closed $3$-manifold with a triangulation $T$ is given,
three matching equations arise for every face of $T$, 
and hence the number of equations is $3 \times 4t/2 =6t$.
 The Q-theory introduced by J. Tollefson in \cite{T}
uses the Q-matching equations
with only $3t$ variables corresponding to the types of quadrilaterals.
 The number of equations is equal to the number of edges of the triangulation.
 However, Q-theory does not take care of Euler characteristic. 
 Incompressible surfaces can be deformed into Q-fundamental surfaces.
 But, the resulting surfaces may have smaller Euler characteristics 
than the original ones.

 In practice, normal surfaces
in the figure eight knot complement with an ideal triangulation
are studied very well.
 See \cite{AMR}, \cite{Ka}, \cite{MR} and \cite{R}.
 In \cite{F},
E. A. Fominykh gives a complete description of the fundamental surfaces
for infinite series of $3$-manifolds including $(p,q)$-lens spaces
with certain handle decompositions (not with triangulations).
 His argument is geometrical.
 Haken's original normal surface theory is based on handle decompositions
rather than triangulations.
 However, triangulation is easier to be treated by a computer.

 In this paper, 
we determine all the Q-fundamental surfaces 
for $(p,1)$-lens spaces and $(p,2)$-lens spaces
naturally triangulated with $p$ tetrahedra
by algebraic calculations.
 For general $(p,q)$-lens spaces,
we give an upper bound for elements of vectors 
which represent Q-fundamental surfaces
with no quadrilateral normal disks 
disjoint from the core circles of lens spaces.
 On the other hand,
in the last section
we also give an example of non-orientable closed surface 
of maximal Euler characteristic
which is fundamental
and has three parallel copies of such quadrilateral normal disks.
 In fact, 
there is an infinite sequence of $(p_n, q_n)$-lens spaces ($n \ge 2$)
which contain fundamental surfaces
homeomorphic to the connected sum of $n$ projective planes
with $n-2$ parallel sheets of a normal disk as above
(the detail will be given in \cite{IH}).
 In lens spaces, 
non-orientable closed surfaces with maximum Euler characteristics 
are interesting.
 A formula for calculating the maximum Euler characteristic
is given by G. E. Bredon and J. W. Wood in \cite{BW}.

 B. A. Burton established a computer program {\it Regina}
which determines all the $\lq\lq$vertex surfaces"
with respect to the (Q-)matching equations
for a given triangulated $3$-manifold.
 See \cite{B}.
 The authors don't know
whether there is a computer software 
which determines all the fundamental surfaces
for general triangulated $3$-manifolds or not.
 Vertex surfaces are introduced in \cite{JT} by W. Jaco and J. Tollefson.
 We can calculate them by computer much more easily
than fundamental ones.
 Most important two-sided surfaces can be deformed into vertex surfaces.
 However, non-orientable surfaces in lens spaces are one-sided.
 In fact, we 
see later in Theorem \ref{theorem:main}
that the non-orientable closed surface with maximum Euler characteristic
in the $(p,1)$-lens space can be represented by a Q-fundamental surface,
and cannot be by a normal surface corresponding to a vertex solution.



\begin{figure}[htbp]
\begin{center}
\includegraphics[width=6cm]{s-suspension.eps}
\end{center}
\caption{}
\label{fig:suspension}
\end{figure}

 We can obtain a {\it $(p,q)$-lens space} from a suspension of a $p$-gon 
by gluing each trigonal face in the upper hemisphere with that 
in the lower hemisphere, performing ($2\pi q/p$)-rotation 
and taking a mirror image about the equator. 
 See Figure \ref{fig:suspension}.
 Precisely, the trigon $v_+ v_i v_{i+1}$ is glued to $v_- v_{i+q} v_{i+q+1}$, 
where indices are considered in modulo $p$. 
 The edge $e_i$ connects $v_+$ and $v_i$, and also $v_-$ and $v_{i+q}$. 
 The horizontal edges are all glued up together into an edge $E_h$. 
 Taking an axis $E_v$ connecting the vertices $v_+$ 
and $v_-$ in the suspension, we can decompose it into $p$ tetrahedra. 
 This gives a natural triangulation $T(p,q)$ of a $(p,q)$-lens space. 
 The $i$-th tetrahedron $\tau_i$ has 
vertices $v_+$, $v_-$, $v_i$ and $v_{i+1}$. 

\begin{figure}[htbp]
\begin{center}
\includegraphics[width=8cm]{s-normaldisks.eps}
\end{center}
\caption{}
\label{fig:normaldisks}
\end{figure}

 We recall the definition of Q-fundamental surfaces.
 Let $M$ be a closed $3$-manifold and $T$ a triangulation of $M$, that is, 
a decomposition of $M$ into finitely many tetrahedra. 
 Let $F$ be a closed surface embedded in $M$. 
 $F$ is called a {\it normal surface} with respect to the triangulation $T$ 
if $F$ intersects each tetrahedron $\tau$ in disjoint union of normal disks, 
or in the empty set.
 There are two kinds of {\it normal disks}, 
trigons called T-disks, and quadrilaterals called Q-disks. 
 Each tetrahedron contains $4$ types of T-disks and $3$ types of Q-disks 
as in illustrated in Figure \ref{fig:normaldisks}. 

\begin{figure}[htbp]
\begin{center}
\includegraphics[width=8cm]{s-squarecondition.eps}
\end{center}
\caption{}
\label{fig:squarecondition}
\end{figure}

 If two or three types of Q-disks exist in a single tetrahedron, 
then they intersect each other. 
 See Figure \ref{fig:squarecondition} (1).
 Hence any normal surface intersects 
each tetrahedron in Q-disks of the same type and T-disks
(Figure \ref{fig:squarecondition} (2)). 
 This is called {\it  the square condition}. 

\begin{figure}[htbp]
\begin{center}
\includegraphics[width=10cm]{s-numberQdisk.eps}
\end{center}
\caption{}
\label{fig:numberQdisk}
\end{figure}

 Tollefson introduced Q-coordinates representing normal surfaces
in \cite{T}. 
 We first number types of Q-disks. 
 In our case, 
we number the $3$-types of Q-disks $i1$, $i2$ and $i3$ in $\tau_i$
as in Figure \ref{fig:numberQdisk}, 
where the axis $E_v$ is in front of the tetrahedron. 
 $X_{i1}$ separates $E_h$ and $E_v$,
$X_{i2}$ separates $e_{i+1}$ and $e_{i-q}$
and $X_{i3}$ does $e_i$ and $e_{i-(q-1)}$.
 Let $x_{ij}$ be the number of Q-disks of type $ij$
contained in a normal surface $F$. 
 Then we place them in a vertical line 
in the order of indices lexicographically, 
to obtain a {\it Q-coordinate} of $F$. 

 Tollefson found the Q-coordinate of a normal surface 
satisfies the linear system of the Q-matching equations as below.
 One Q-matching equation arises 
from each edge of the triangulation $T$ under consideration.
 Let $e$ be an edge of $T$.
 We observe each tetrahedron containing $e$, placing $e$ in front. 
 Then the sense of a Q-disk with respect to $e$ is $0, +1$ or $-1$ 
if it is flat, left side up or right side up respectively. 
 See Figure \ref{fig:sense}.
 The Q-matching equation about $e$ derives 
from the constraint that the number of Q-disks with plus sense is equal 
to that with minus sense around the edge $e$. 
 See Figure \ref{fig:constraint}.

\begin{figure}[htbp]
\begin{center}
\includegraphics[width=7cm]{s-sense.eps}
\end{center}
\caption{}
\label{fig:sense}
\end{figure}

\begin{figure}[htbp]
\begin{center}
\includegraphics[width=5cm]{s-constraint.eps}
\end{center}
\caption{}
\label{fig:constraint}
\end{figure}

 To define Q-fundamental surface, we need some terminologies on algebra. 
 Let 
${\bf v} = {}^t (v_1, \cdots, w_n)$, ${\bf w} = {}^t (w_1, \cdots, w_n)$ 
be vectors in ${\Bbb R}^n$,
where ${}^t{\bf x}$ denotes the transposition of ${\bf x}$.
 We write ${\bf v} \le {\bf w}$ 
if $v_i \le w_i$ for all $i \in \{ 1, \cdots, n \}$.
 ${\bf v} < {\bf w}$
means that both ${\bf v} \le {\bf w}$ and ${\bf v} \ne {\bf w}$ hold.
 A vector ${\bf u} \in {\Bbb R}^n$ is {\it non-negative} 
if ${\bf 0} \le {\bf u}$,
and {\it integral} if all its elements are in ${\Bbb Z}$.

 We consider {\it Q-matching equations} for all the edges, 
to obtain a linear system of equations. 
 Tollefson showed 
that a non-zero non-negative integral solution of the Q-matching equations 
determines a unique normal surface with no trivial component
if it satisfies the square condition, 
where a trivial component is a normal surface 
composed of T-disks and containing no Q-disks, 
that is, a $2$-sphere surrounding a vertex of $T$. 

 Let $A {\bf x}={\bf 0}$ be a linear system of equations, 
where $A$ is a matrix with all the elements in ${\Bbb Z}$, 
and ${\bf x}$ is a vector of variables. 
 $V_A$ denotes the solution space of the linear system 
considered in ${\Bbb R}^n$. 
 A non-zero non-negative integral solution ${\bf v}$ 
is called a {\it fundamental solution}, 
if there are no integral solution ${\bf v}' \in V_A$ 
with ${\bf 0} < {\bf v}' < {\bf v}$.
 A non-zero non-negative integral solution ${\bf v}$ 
is called a {\it vertex solution}, 
if for every positive integer $k$, 
all the integral solutions ${\bf v}''$ 
with ${\bf 0} \le {\bf v}'' \le k{\bf v}$ 
are multiples of ${\bf v}$. 
 This condition coincides with that for fundamental solutions when $k=1$. 
 Hence a vertex solution is a fundamental solution. 

 A normal surface is called 
a {\it Q-fundamental surface} 
if it corresponds to a fundamental solution
of the system of the Q-matching equations respectively. 
 A two sided normal surface is called 
a {\it vertex surface}
if its coordinate is either a vertex solution of the Q-maching equations
or twice a vertex solution representing a one sided surface.
 Tollefson showed that 
if an irreducible $\partial$-irreducible triangulated $3$-manifold 
contains a two sided incompressible $\partial$-incompressible surface, 
then there is such one which is Q-vertex. 
 Note that the set of fundamental surfaces 
with respect to Haken's matching equations 
contains such a surface with maximum Euler characteristic, 
while the set of the Q-fundamental surfaces may not.  


 For non-orientable closed surfaces with maximal Euler characteristics
in lens spaces, 
we can apply the next theorem,
since non-orientable surfaces in orientable $3$-manifolds are 
one sided, and hence non-separating.
 Note that a non-separating surface in a $(p,q)$-lens space $M$ 
must be non-orientable
because the $1$-dimensional homology 
$H_1 (M; {\Bbb Z}) = {\Bbb Z} / p{\Bbb Z}$.

\begin{theorem}\label{theorem:NonSepFund}
 Let $M$ be a closed $3$-manifold,
$T$ a triangulation of $M$.
 Suppose that $M$ contains a non-separating closed surface $F$.
 Then $M$ contains such one which is Q-fundamental,
and such one with maximum Euler characteristic which is fundamental
with respect to $T$.
\end{theorem}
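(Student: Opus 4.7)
The plan is to reduce the given topological hypothesis (existence of a non-separating surface) to a linear-algebraic statement via Kneser-Haken normalization, then exploit two additivity properties of Haken sums of normal surfaces: (i) Euler characteristic is a linear function of the normal coordinates, so $\chi(G+H)=\chi(G)+\chi(H)$, and (ii) the $\mathbb{Z}/2$-homology class is additive, $[G+H]=[G]+[H]$ in $H_2(M;\mathbb{Z}/2)$, because the surface $G+H$ obtained by cut-and-paste is cobordant to $G\sqcup H$. Recall that being non-separating is equivalent to having a nonzero class in $H_2(M;\mathbb{Z}/2)$.

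First I would normalize the given $F$ via the standard sequence of isotopies and compressions that reduces weight with respect to $T$. Isotopies preserve the homology class, and each compression along an embedded $2$-disk exhibits the before-and-after surfaces as cobordant, so the $\mathbb{Z}/2$-homology class survives the whole process. Thus there is a normal surface $F_{0}$ with $[F_{0}]=[F]\neq 0$, which in particular contains a non-separating component. For the Q-fundamental assertion, I would then expand the Q-vector of $F_{0}$ as a non-negative integer combination $\sum_{i}n_{i}v_{i}$ of Q-fundamental solutions. Each $v_{i}$ determines a unique normal surface $F_{i}$ (up to trivial vertex-linking sphere components, which are separating). By the homology additivity, $\sum_{i}n_{i}[F_{i}]=[F_{0}]\neq 0$, so at least one $F_{i}$ is non-separating; this $F_{i}$ is the desired Q-fundamental non-separating surface.

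For the fundamental surface of maximum Euler characteristic, I would use the same normalization step to see that the supremum of $\chi$ over non-separating closed surfaces equals the supremum over non-separating normal surfaces, which is achieved because $\chi$ is bounded above by $2$ times the number of components and components are controlled. Let $F^{*}$ be a non-separating normal surface achieving this maximum, and among all such choose one minimizing the coordinate sum $|F^{*}|$. I claim $F^{*}$ is fundamental. If not, write $F^{*}=G+H$ as a nontrivial Haken sum. Homology additivity and $[F^{*}]\neq 0$ let us assume $G$ is non-separating, and maximality gives $\chi(G)\leq\chi(F^{*})$; if equality holds then $G$ is a non-separating normal surface of maximum $\chi$ with $|G|<|F^{*}|$, contradicting minimality. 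In the remaining case $\chi(G)<\chi(F^{*})$ one has $\chi(H)>0$, so $H$ contains a sphere or a projective plane component $H_{0}$, and I would analyze how reattaching $H_{0}$ to $G$ (using that separating spheres and $\mathbb{RP}^{2}$'s interact predictably with $[G]$) produces a non-separating normal surface of Euler characteristic strictly greater than $\chi(F^{*})$, contradicting the maximality of $\chi(F^{*})$.

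The genuinely delicate step is this last case analysis: controlling the interplay between the homology class of $G$ and the homology class of the component $H_{0}$ of positive Euler characteristic that one peels off $H$, in order to actually exhibit a non-separating normal surface beating $F^{*}$ in Euler characteristic. Everything else is formal once the additivity of $\chi$ and of the $\mathbb{Z}/2$-homology class under Haken sums is in hand, together with Tollefson's reconstruction of a normal surface from its Q-vector that we quoted above.
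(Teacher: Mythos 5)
Your first half (the Q-fundamental assertion) is essentially correct and reaches the same conclusion as the paper by a slightly different route: where you use additivity of the class in $H_2(M;{\Bbb Z}/2)$ under Haken sum, the paper proves a direct geometric ``switch'' lemma (if $F=F_1+F_2$ is non-separating then some $F_i$ is non-separating, because each cut-and-paste joins $M_1^{\pm}\cap M_2^{\pm}$ pairs consistently) and then runs a minimality argument on the number of Q-disks. Your homological version is equivalent in content, and decomposing the Q-vector directly into fundamental solutions is a legitimate alternative to the paper's minimal-count induction, granted Tollefson's theorem that compatible Q-vector sums are realized by geometric Haken sums up to trivial (null-homologous) components.

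The second half has a genuine gap, and it is exactly the step you flag as ``delicate.'' After writing $F^{*}=G+H$ with $G$ non-separating, your case $\chi(G)<\chi(F^{*})$ is not handled: knowing only that $\chi(H)>0$, so that $H$ contains a sphere or projective plane component $H_{0}$, does not let you build a non-separating normal surface beating $\chi(F^{*})$. For instance, $G+H_{0}$ satisfies $\chi(G+H_{0})=\chi(G)+\chi(H_{0})$, which can still be strictly less than $\chi(F^{*})$ when $H$ has several positive-characteristic components, and there is no control ensuring the result stays non-separating or that the remaining components of $H$ can be discarded. The paper avoids this case entirely rather than resolving it: it chooses $F'$ of minimal weight $w(F')=|F'\cap T^{(1)}|$ among non-separating normal surfaces of maximal Euler characteristic, puts the Haken sum $F'=F_1+F_2$ in \emph{reduced form}, and invokes Lemma 2.1 of Jaco--Oertel to conclude the sum has \emph{no disk patches}. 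Since Euler characteristic is additive over patches, this forces $\chi(F_1),\chi(F_2)\ge\chi(F')$, so only your ``equality'' case can occur, and that one is killed by $w(F_1)<w(F')$ exactly as you argue with the coordinate sum. So the missing ingredient is the reduced-form/no-disk-patch lemma; without it (or some substitute establishing $\chi(G)\ge\chi(F^{*})$), the argument does not close.
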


 In the above theorem, $M$ is orientable or non-orientable,
and $F$ is one sided or two sided.
 To establish a result of this type on one sided surface
in order to obtain a one sided fundamental surface
seems to be almost impossible
since a Haken sum on two sided surfaces may yield a one sided surface
as in Figure \ref{fig:CutAndPaste}.

\begin{figure}[htbp]
\begin{center}
\includegraphics[width=8cm]{s-CutAndPaste.eps}
\end{center}
\caption{}
\label{fig:CutAndPaste}
\end{figure}

 In our situation of $(p,q)$-lens space, 
the $(3i-2)$-nd, $(3i-1)$-st and $3i$-th elements of a Q-coordinate 
together form the $i$-th {\it block} for $1\le i \le p$. 
 These elements correspond to the numbers of quadrilateral disks 
of type $X_{i1}, X_{i2}, X_{i3}$ in $\tau_i$.
 We often put a vertical line $\lq\lq |$" instead of a comma
between every adjacent pair of blocks.

\begin{theorem}\label{theorem:main}
\begin{enumerate}
\item[(i)]
 The triangulation $T(2,1)$ of $(2,1)$-lens space has
exactly three Q-fundamental surfaces
${\bf f}_1={}^t(1,0,0\ |\ 1,0,0)$,
${\bf f}_2={}^t(0,1,0\ |\ 0,0,1)$,
${\bf f}_3={}^t(0,0,1\ |\ 0,1,0)$.

 ${\bf f}_1$ represents a Heegaard splitting torus 
which surrounds the core circle $E_v$. 
 It does $E_h$ also on the other side of it.
 Each of ${\bf f}_2$ and ${\bf f}_3$ represents a projective plane.
 $\pi$-rotation about the axis $E_v$ 
carries ${\bf f}_3$ to ${\bf f}_2$. 
 $2{\bf f}_2$ represents the inessential $2$-sphere surrounding the edge $e_2$,
and $2{\bf f}_3$ such one surrounding $e_1$.
\item[(ii)]
 The triangulation $T(p,1)$ of $(p,1)$-lens space with $p \ge 3$ has 
exactly $p+3$ Q-fundamental surfaces 
${\bf t}'_1, \cdots, {\bf t}'_p, {\bf f}'_1, {\bf f}'_2$ and ${\bf f}'_3$ below 
when $p$ is even,
and $p+1$ Q-fundamental surfaces 
${\bf t}'_i$'s and ${\bf f}'_1$ when $p$ is odd. 
\newline
${\bf t}'_i
={}^t(0,0,0\ | \cdots |\ 0,0,0\ |\ 0,1,0\ |\ 0,0,2\ |\ 0,1,0\ |\ 0,0,0\ | 
\cdots |\ 0,0,0)$
\newline
${\bf f}'_1={}^t(1,0,0\ |\ 1,0,0\ | \cdots |\ 1,0,0)$
\newline
${\bf f}'_2
={}^t(0,1,0\ |\ 0,0,1\ |\ 0,1,0\ |\ 0,0,1\ | \cdots |\ 0,1,0\ |\ 0,0,1)$
\newline
${\bf f}'_3
={}^t(0,0,1\ |\ 0,1,0\ |\ 0,0,1\ |\ 0,1,0\ | \cdots |\ 0,0,1\ |\ 0,1,0)$
\newline
 ${\bf t}'_i$ has ${}^t(0\ 0\ 2)$ as the $i$-th block,
and represents the inessential $2$-sphere surrounding the edge $e_i$.
 ${\bf f}'_2$ and ${\bf f}'_3$ represent a non-orientable closed surface 
with maximum Euler characteristic, 
which is the connected sum of $p/2$ projective planes. 
 Note that $2\pi/p$-rotation about the axis $E_v$ 
carries ${\bf f}'_3$ to ${\bf f}'_2$. 
 ${\bf f}'_2$ and ${\bf f}'_3$ are not 
vertex solutions of the Q-mathing equations
since $2{\bf f}'_2 = {\bf t}'_2 + {\bf t}'_4 + \cdots + {\bf t}'_p$ and
$2{\bf f}'_3 = {\bf t}'_1 + {\bf t}'_3 + \cdots + {\bf t}'_{p-1}$.
 ${\bf f}'_1$ represents a Heegaard splitting torus 
which surrounds the core circles $E_v$ and $E_h$. 
\item[(iii)]
 Let $p$ be an odd integer with $p\ge 5$. 
 The triangulation $T(p,2)$ of $(p,2)$-lens space has 
exactly $p+1$ Q-fundamental surfaces 
\newline
${\bf t}''_i={}^t(0,0,0\ | \cdots |\ 0,0,0\ |\ 0,1,0\ |\ 0,0,1\ |\ 0,0,1\ |\ 0,1,0\ |\ 0,0,0\ | \cdots |\ 0,0,0)$,
\newline
${\bf f}''_1={}^t(1,0,0\ |\ 1,0,0\ | \cdots |\ 1,0,0)$.
\newline
 ${\bf t}''_i$ 
with the $(i-1)$-th and the $(i+2)$-th blocks being ${}^t(0\ 1\ 0)$
represents the $2$-sphere surrounding the edge $e_i$.
 ${\bf f}''_1$ represents the Heegaard splitting torus 
surrounding $E_v$ and $E_h$. 
\end{enumerate}
\end{theorem}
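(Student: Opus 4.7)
The plan is to carry out an explicit analysis of the Q-matching equations of $T(p,q)$ for $q\in\{1,2\}$. I would first write down the $p+2$ matching equations corresponding to the edges $E_v$, $E_h$, and $e_1,\ldots,e_p$, using the quadrilateral types and sign conventions of Figures \ref{fig:numberQdisk} and \ref{fig:sense}: the equations at $E_v$ and at $E_h$ involve only the variables $x_{i2}$ and $x_{i3}$ (since $X_{i1}$ is parallel to both $E_v$ and $E_h$), while the equation at $e_j$ couples the variables in the four tetrahedra $\tau_{j-1},\tau_j,\tau_{j+q-1},\tau_{j+q}$ counted with multiplicity. The whole system inherits the cyclic symmetry $i\mapsto i+1$ of $T(p,q)$, which I would exploit to reduce the case analysis to a single orbit.

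Each listed vector is then verified to satisfy these equations and the square condition by direct substitution; its fundamentality follows from the observation that it has small support with entries in $\{1,2\}$, so any putative proper integer subsolution ${\bf 0}<{\bf v}'<{\bf v}$ lies in a small finite set which the matching equations eliminate — for example a proper subsolution of ${\bf f}'_1$ would be supported on a strict subset of the $X_{i1}$'s, and the $E_v$ equation forces the contributions of $X_{i1}$'s to balance around the cycle, immediately ruling this out. The identities $2{\bf f}'_2={\bf t}'_2+{\bf t}'_4+\cdots+{\bf t}'_p$ and $2{\bf f}'_3={\bf t}'_1+{\bf t}'_3+\cdots+{\bf t}'_{p-1}$ recorded in the statement confirm that ${\bf f}'_2,{\bf f}'_3$ are fundamental (admit no integer subsolution) yet are not vertex solutions.

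The core of the argument is the completeness claim: every Q-fundamental solution appears in the list. For this I would establish the stronger statement that every non-zero non-negative integer solution ${\bf v}$ decomposes as a non-negative integer combination of the listed vectors, whence any fundamental solution must equal one of them. The argument proceeds by a peeling induction on the blocks $(a_i,b_i,c_i)_{i=1}^p$ of ${\bf v}$: first subtract $\min_i a_i$ copies of the Heegaard-torus vector (${\bf f}_1$, ${\bf f}'_1$, or ${\bf f}''_1$) to force some $a_j=0$; then use the $E_v$ and $E_h$ equations together with the cyclic propagation of the edge equations at the $e_j$'s to force \emph{all} $a_i=0$; finally, within the residual sub-cone, peel off copies of ${\bf t}'_i$ (resp.\ ${\bf t}''_i$) block by block, and in part (ii) with $p$ even, possibly copies of ${\bf f}'_2,{\bf f}'_3$. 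Parity enters naturally because the alternating $X_{i2},X_{i3}$ pattern defining ${\bf f}'_2,{\bf f}'_3$ only closes up around the cycle when $p$ is even; for odd $p$, every such pattern decomposes through the ${\bf t}'_i$'s.

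The main obstacle will be this cyclic combinatorial analysis, especially in part (iii), where the $q=2$ matching equations at $e_j$ couple tetrahedra at distance two and the propagation is less local than in the $q=1$ case. Since fundamentality is defined with respect to the Q-matching equations alone, candidate subsolutions need not satisfy the square condition, so one cannot restrict attention to normal surfaces during the minimality checks; every configuration of blocks that could yield an unexpected fundamental solution must be ruled out, including those violating the square condition. Handling the delicate cyclic arithmetic — in particular the interplay between the parities of $p$ and $q$ and the forced alternation patterns that govern when a non-orientable closed surface of maximal Euler characteristic is realized — is what makes parts (ii) and (iii) considerably more involved than the two-tetrahedra case (i).
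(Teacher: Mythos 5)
There is a genuine gap in your completeness step. You propose to prove the ``stronger statement that every non-zero non-negative integer solution ${\bf v}$ decomposes as a non-negative integer combination of the listed vectors.'' This statement is false. For any triangulation, the vector ${\bf s}_i$ whose $i$-th block is ${}^t(1,1,1)$ and whose other blocks vanish is a non-negative integer solution of the Q-matching equations (the three columns of each block sum to zero). For $T(2,1)$ this is ${\bf v}_a={}^t(1,1,1\,|\,0,0,0)$, and any non-negative integer combination $c_1{\bf f}_1+c_2{\bf f}_2+c_3{\bf f}_3={}^t(c_1,c_2,c_3\,|\,c_1,c_3,c_2)$ would force $c_1=1$ and $c_1=0$ simultaneously; so ${\bf v}_a$ does not decompose. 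Worse, ${\bf v}_a$ is itself a fundamental solution of the linear system (its only integer subsolutions are multiples of itself), so your peeling induction cannot terminate in the listed vectors. The point you are missing is that the theorem classifies Q-fundamental \emph{surfaces}: the solution itself must satisfy the square condition to define a normal surface, and fundamental solutions violating the square condition (such as the ${\bf s}_i$) are excluded by definition, not ``ruled out'' by the equations. The paper's completeness argument therefore parametrizes the general solution by an explicit basis $\{{\bf s}_i,{\bf t}_i\}$ of the solution space, and then imposes non-negativity, integrality \emph{and the square condition} from the outset; a case analysis on the signs of the coefficients shows that every such solution dominates (is $\ge$) one of the listed vectors, whence a fundamental one equals a listed one.

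Your worry that ``candidate subsolutions need not satisfy the square condition, so one cannot restrict attention to normal surfaces during the minimality checks'' is resolved by a simple observation you did not make: if ${\bf 0}\le{\bf v}'\le{\bf v}$ and ${\bf v}$ satisfies the square condition, then so does ${\bf v}'$, since its entries are dominated. Hence, once one knows that every non-zero non-negative integer solution satisfying the square condition dominates a listed candidate, verifying fundamentality of the candidates reduces to checking that no candidate is strictly below another, which the paper does by elementary size and support comparisons. Two smaller slips: the identities $2{\bf f}'_2=\sum{\bf t}'_{2k}$ and $2{\bf f}'_3=\sum{\bf t}'_{2k-1}$ establish only that ${\bf f}'_2,{\bf f}'_3$ are not vertex solutions, not that they are fundamental; and subtracting $\min_i a_i$ copies of the Heegaard-torus vector in your peeling scheme can make the second and third entries of a block negative (in the parametrization these entries are $a_i$ plus sums of $b_j$'s, which may be negative), so even the first step of the peeling is not well defined on the cone of solutions.
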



 We consider a general $(p,q)$-lens space.
 Since the $(p,q)$-lens space is homeomorphic 
to the $(p,p-q)$-lens space,
it is sufficient to consider the case of $2 \le q < p/2$.

\begin{lemma}\label{lemma:generator}
 For the triangulation $T(p,q)$ of the $(p,q)$-lens space
with $p \ge 5$, $2 \le q < p/2$ and $GCM(p,q)=1$, 
the vectors
${\bf s}_1, {\bf s}_2, \cdots, {\bf s}_p$,
${\bf t}_1, {\bf t}_2, \cdots, {\bf t}_p$ as below
form a basis of the solution space in ${\Bbb R}^{3t}$
of the Q-matching equations.
\newline
 $($The $j$-th block of ${\bf s}_i) = \left\{ 
\begin{array}{l}
{}^t(1,1,1)\ \ {\rm if}\ j=i \\
{}^t(0,0,0)\ \ {\rm otherwise}
\end{array} \right.$
\newline
 $($The $j$-th block of ${\bf t}_i)= \left\{ 
\begin{array}{l}
{}^t(0,1,0)\ \ {\rm if}\ j=i-1\ {\rm or}\ i+q \\
{}^t(0,0,1)\ \ {\rm if}\ j=i\ {\rm or}\ i+q-1 \\
{}^t(0,0,0)\ \ {\rm otherwise}
\end{array} \right.$

 Hence a general solution ${\bf v}$ in ${\Bbb R}^{3t}$
is presented as below. 
\newline
${\bf v}=a_1 {\bf s}_1 + \cdots +a_p {\bf s}_p 
+ b_1 {\bf t}_1 + \cdots + b_p {\bf t}_p$
\newline
$= 
{}^t (a_1,\ a_1+b_2+b_{p-q+1},\ a_1+b_1+b_{p-q+2} \ | \cdots |\ 
a_i,\ a_i+b_{i+1}+b_{p-q+i},\ a_i+b_i+b_{p-q+i+1} \ | \cdots $
\newline
\hspace*{2cm}
$\cdots |\ a_p,\ a_p+b_1+b_{p-q},\ a_p+b_p+b_{p-q+1})$,
\newline
with $a_1, \cdots, a_p, b_1, \cdots, b_p\in {\Bbb R}$.
\end{lemma}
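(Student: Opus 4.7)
The plan is threefold: (I) verify each ${\bf s}_i$ and each ${\bf t}_i$ solves the Q-matching equations; (II) show the $2p$ vectors are linearly independent; (III) conclude by establishing that the solution space $V$ has dimension $2p$, so the vectors form a basis.

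In stage (I), the vector ${\bf s}_i$ places one Q-disk of each type in tetrahedron $\tau_i$, and around every edge of $\tau_i$ the three Q-disk types contribute the sense multiset $\{0,+1,-1\}$ (one type is flat with respect to the edge, while the other two separate opposite-edge pairs meeting the edge and so carry opposite senses); every Q-matching equation is therefore satisfied tautologically. For ${\bf t}_i$, only the four tetrahedra $\tau_{i-1}, \tau_i, \tau_{i+q-1}, \tau_{i+q}$ contribute. Using the separation rules that $X_{j2}$ separates $e_{j+1}$ and $e_{j-q}$, and $X_{j3}$ separates $e_j$ and $e_{j-q+1}$, I will read off the signed contribution of each of the four Q-disks $X_{i-1,2}, X_{i,3}, X_{i+q-1,3}, X_{i+q,2}$ at each affected edge ($E_v$, $E_h$, and the relevant $e_k$'s), and verify pairwise cancellation edge by edge.

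For stage (II), suppose $\sum a_i{\bf s}_i+\sum b_i{\bf t}_i={\bf 0}$. The first coordinate of each block is supported only by ${\bf s}_j$, so $a_j=0$. The second and third coordinates then give
\[
b_{j+1}+b_{j-q}=0, \qquad b_j+b_{j-q+1}=0 \qquad (j\in{\Bbb Z}/p{\Bbb Z}),
\]
from which $b_{m+2}=b_m$ follows by subtraction. If $p$ is odd, the shift $m\mapsto m+2$ generates ${\Bbb Z}/p{\Bbb Z}$, so all $b_i$ agree with some scalar $c$ satisfying $c=-c$, hence $c=0$. If $p$ is even, the assumption $GCM(p,q)=1$ forces $q$ odd, so $j$ and $j-q+1$ share parity and $b_j=-b_{j-q+1}=-b_j$ holds on each parity class; either way all $b_i=0$.

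In stage (III), $\dim V\geq 2p$ from (II) forces the $(p+2)\times 3p$ coefficient matrix of the Q-matching equations to have rank at most $p$. For the matching lower bound I will exhibit two explicit syzygies among the $p+2$ edge equations---one for each of the two vertex classes $\{v_+,v_-\}$ and $\{v_1,\ldots,v_p\}$ of $T(p,q)$, arising from the standard vertex-link relations in Tollefson's Q-theory---and then certify rank $\geq p$ by producing a nonsingular $p\times p$ submatrix (for instance, restricting to the $p$ equations for $e_1,\ldots,e_p$ together with suitable columns from each block). The closed-form expression for a general ${\bf v}$ in the statement is then the coordinate-wise expansion of $\sum a_i{\bf s}_i+\sum b_i{\bf t}_i$, using $b_{j-q}\equiv b_{j+p-q}$ modulo $p$. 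The principal obstacle is stage (III): pinning down exactly two syzygies and ruling out a third requires careful sign bookkeeping around each vertex, but the edge-by-edge data assembled in stage (I) provide the necessary input.
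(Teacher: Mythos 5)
Your plan is correct and follows essentially the same route as the paper: check that each ${\bf s}_i$ (senses summing to zero around every edge) and each ${\bf t}_i$ (the $2$-sphere around $e_i$) solve the Q-matching equations, prove linear independence via $a_j=0$ and the telescoping relation $b_{m}=b_{m+2}$ combined with $b_j=-b_{j-q+1}$, and pin the solution space at dimension $2p$ by showing the $(p+2)\times 3p$ coefficient matrix has rank exactly $p$ (the two redundant rows being the $E_v$ and $E_h$ equations, each equal to $-\tfrac12(r_1+\cdots+r_p)$). One caution on stage (III): the ``obvious'' $p\times p$ submatrix on the second columns of the blocks is $I+P^{\,q-1}$ ($P$ the cyclic shift), which is singular whenever $p/\gcd(p,q-1)$ is even, so it is safer to certify rank $\ge p$ by running your stage-(II) telescoping argument directly on the first $p$ rows, which is what the paper does.
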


We set ${\mathcal B}=\{ b_1,b_2,\cdots ,b_p \}$, 
and also 
${\mathcal B}_0=\{ b_2,b_4,\cdots ,b_p \}$, 
${\mathcal B}_1=\{ b_1,b_3,\cdots ,b_{p-1} \}$ 
when $p$ is even. 

\begin{lemma}\label{lemma:p-qZ}
 Suppose that
${\bf v}=
a_1 {\bf s}_1 + \cdots +a_p {\bf s}_p + 
b_1 {\bf t}_1 + \cdots + b_p {\bf t}_p$ 
is contained in ${\Bbb Z}^{3p}$.
 Then $a_k \subset {\Bbb Z}$ for all integer $k$ with $1 \le k \le p$,
and either ${\mathcal B}_x \subset {\Bbb Z}$ or 
${\mathcal B}_x \subset {\Bbb Z}+1/2$ holds
for ${\mathcal B}_x = {\mathcal B}$, ${\mathcal B}_0$ and ${\mathcal B}_1$.
\end{lemma}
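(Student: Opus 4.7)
The plan is to read the integrality conclusions off the coordinates of ${\bf v}$ one parity class at a time. Since the first entry of the $i$-th block of ${\bf v}$ is, by the formula in Lemma \ref{lemma:generator}, exactly $a_i$, the hypothesis ${\bf v}\in\mathbb{Z}^{3p}$ gives $a_i\in\mathbb{Z}$ with no work. Everything that remains concerns the $b_i$, so I would work modulo $\mathbb{Z}$: set $c_i := b_i \bmod \mathbb{Z}\in\mathbb{R}/\mathbb{Z}$ and derive the constraints these must satisfy.

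The second and third entries of the $i$-th block of ${\bf v}$ are $a_i+b_{i+1}+b_{i-q}$ and $a_i+b_i+b_{i-q+1}$ (indices mod $p$), and both lie in $\mathbb{Z}$. Subtracting $a_i$ gives, for every $i$,
$$c_{i+1}+c_{i-q}\equiv 0,\qquad c_i+c_{i-q+1}\equiv 0\pmod{\mathbb{Z}}.$$
After a shift of index, both say that certain two-term sums of $c$'s vanish mod $\mathbb{Z}$; comparing them (equating the two expressions for $-c_j$) yields $c_k\equiv c_{k+2}\pmod{\mathbb{Z}}$ for every $k$. This is the key rigidity: fractional parts of the $b_i$'s are invariant under shifting the index by $2$.

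Now I would split by the parity of $p$. If $p$ is \emph{odd}, then $\gcd(2,p)=1$, so iterating $k\mapsto k+2$ visits every residue in $\mathbb{Z}/p$; hence all $c_i$ coincide modulo $\mathbb{Z}$. Substituting into, say, $c_i+c_{i-q+1}\equiv 0$ gives $2c_i\equiv 0$, so $c_i\in\{0,1/2\}$ uniformly, proving $\mathcal{B}\subset\mathbb{Z}$ or $\mathcal{B}\subset\mathbb{Z}+1/2$. If $p$ is \emph{even}, then $\gcd(p,q)=1$ forces $q$ to be odd, so $q+1$ is even; consequently the shift by $q+1$ appearing in $c_{i+1}+c_{i-q}\equiv 0$ preserves parity of the index. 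The two-term relation therefore lives within each parity class and, since all $c_i$ in a class already agree, collapses to $2c_i\equiv 0$. This pins down $c_i\in\{0,1/2\}$ separately on each class, which is exactly the claim for $\mathcal{B}_0$ and $\mathcal{B}_1$.

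The only real obstacle is keeping the index arithmetic mod $p$ straight and noticing the parity observation: that $\gcd(p,q)=1$ with $p$ even forces $q$ odd, which is precisely what lets the two-term congruences close up within each parity class. Once this is in hand the argument is a short deduction, and the statement about $\mathcal{B}$ in the odd case and $\mathcal{B}_0,\mathcal{B}_1$ in the even case follows directly from the two possibilities $c_i=0$ or $c_i=1/2$.
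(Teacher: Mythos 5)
Your proof is correct and follows essentially the same route as the paper: read $a_i\in{\Bbb Z}$ from the first entries, combine the third entry of block $i$ with the second entry of block $i+1$ to get $b_i-b_{i+2}\in{\Bbb Z}$, and then use the parity of $p$ (and, when $p$ is even, the fact that $q$ must be odd so that $q-1$ and $q+1$ are even) to close the two-term congruence into $2b_i\in{\Bbb Z}$ on the relevant class. The only difference is cosmetic: you work in ${\Bbb R}/{\Bbb Z}$ where the paper speaks of matching "decimal places."
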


\begin{theorem}\label{theorem:p-qfund}
 Assume that 
${\bf v}=
a_1 {\bf s}_1 + \cdots +a_p {\bf s}_p + 
b_1 {\bf t}_1 + \cdots + b_p {\bf t}_p$ 
$(a_1, \cdots, a_p, b_1, \cdots, b_p\in {\Bbb R})$ 
represents a Q-fundamental solution, 
and that $a_k=0$ for all $k$. 
\begin{enumerate}
\item[(I)]
 When $p$ is even, 
one of ${\mathcal B}_0$ and ${\mathcal B}_1$ 
is contained in ${\Bbb Z}$ or ${\Bbb Z}+1/2$
and the other is $\{ 0\}$. 
\item[(II)]
 Suppose 
either $p$ is odd and ${\mathcal B} \subset {\Bbb Z}+1/2$, 
or $p$ is even and one of ${\mathcal B}_0$ and ${\mathcal B}_1$ 
is contained in ${\Bbb Z}+1/2$ and the other is $\{ 0\}$. 
 Then $b_i \in \{-1/2,0,1/2 \}$ for $\forall i \in \{1,\cdots ,p\}$. 
\item[(III)]
 Suppose 
either $p$ is odd and ${\mathcal B} \subset {\Bbb Z}$, 
or $p$ is even and one of ${\mathcal B}_0$ and ${\mathcal B}_1$ 
is contained in ${\Bbb Z}$ and other is $\{ 0\}$D
 Then $b_i \in \{-1,0,1 \}$ for $\forall i \in \{1,\cdots ,p\}$. 
\end{enumerate}
\end{theorem}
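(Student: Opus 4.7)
The plan rests on the observation that each basis vector ${\bf t}_i$ from Lemma \ref{lemma:generator} is itself a non-negative integral element of the solution space, and its support comprises exactly the four entries of ${\bf v}$ of the form $b_i+b_j$, where $j$ runs over the neighbors of $i$ in the graph $G$ on ${\Bbb Z}/p{\Bbb Z}$ whose edges are generated by $\pm(q\pm 1)$. Hence ${\bf v}\ge{\bf t}_i$ holds iff $b_i+b_j\ge 1$ for each of the four neighbors, and fundamentality of ${\bf v}$ therefore forces each $i$ to have some neighbor $j$ with $b_i+b_j\le 0$; combined with non-negativity, $b_i+b_j=0$ for at least one neighbor of each $i$.

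For (I), since $GCM(p,q)=1$ and $p$ is even, both $q$ and $p-q$ are odd. Hence every pair sum $b_j+b_k$ appearing as an entry of ${\bf v}$ has $j\equiv k\pmod 2$, so the contributions from ${\mathcal B}_0$ and from ${\mathcal B}_1$ occupy disjoint entries. Zeroing out each of ${\mathcal B}_0$ and ${\mathcal B}_1$ in turn yields non-negative integral solutions ${\bf v}^{(1)}$ and ${\bf v}^{(0)}$ with ${\bf v}={\bf v}^{(0)}+{\bf v}^{(1)}$; if both were non-zero, ${\bf v}$ would not be fundamental, so one of ${\mathcal B}_0,{\mathcal B}_1$ must be $\{0\}$.

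For (II) and (III) the plan is a \emph{sign-truncation}. Let $r=1/2$ in the half-integer case and $r=1$ in the integer case, and define ${\bf w}=\sum_i b'_i{\bf t}_i$ with $b'_i:=\mathrm{sign}(b_i)\cdot r$ (so $b'_i=0$ if $b_i=0$ in the integer case). A pairwise sign-check, using the non-negativity of ${\bf v}$ to exclude the ``both-negative'' configuration at any neighbor pair, shows that ${\bf w}$ lies in $V_A\cap{\Bbb Z}_{\ge 0}^{3p}$ and satisfies ${\bf 0}\le{\bf w}\le{\bf v}$. If some $|b_{i_0}|>r$, then truncation strictly lowers the entry $b_{i_0}+b_j$ at each neighbor pair $(i_0,j)$ \emph{except} when $b_{i_0}=\pm M$ and $b_j=\mp M$ with $M=\max_i|b_i|$. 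Hence if ${\bf w}={\bf v}$, every neighbor of $i_0$ lies in the opposite extreme; applying the same reasoning at each extremal vertex, the connected component of $i_0$ in $G$ is entirely contained in $A\cup B$ with $A=\{i:b_i=M\}$ and $B=\{i:b_i=-M\}$, and all edges of $G$ within this component run between $A$ and $B$.

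The main obstacle---and the crux of the argument---is to rule out such a bipartite configuration. For $p$ odd, $G$ is connected; for $p$ even, part (I) confines us to a single parity class, whose induced subgraph is the Cayley graph of ${\Bbb Z}/(p/2){\Bbb Z}$ with generators $(q\pm 1)/2$. In either setting, a bipartition of the relevant component would require a homomorphism of the ambient cyclic group onto ${\Bbb Z}/2{\Bbb Z}$ carrying each generator to $1$; but ${\Bbb Z}/p{\Bbb Z}$ with $p$ odd admits no non-trivial homomorphism to ${\Bbb Z}/2{\Bbb Z}$, and for $p$ even the two generators $(q-1)/2$ and $(q+1)/2$ are consecutive integers, hence of opposite parity, so cannot both map to $1$. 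This non-bipartiteness guarantees a strict pair, producing a non-zero ${\bf w}<{\bf v}$ in $V_A\cap{\Bbb Z}_{\ge 0}^{3p}$, contradicting fundamentality of ${\bf v}$ and yielding the claimed bounds on $|b_i|$.
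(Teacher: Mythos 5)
Your proposal is correct and follows essentially the same route as the paper: part (I) by splitting ${\bf v}$ into the even- and odd-indexed contributions and invoking fundamentality, and parts (II), (III) by replacing each $b_i$ with its sign times $1/2$ (resp.\ $1$) to get a solution ${\bf w}$ with ${\bf 0}\le{\bf w}\le{\bf v}$ and then showing ${\bf w}\ne{\bf 0}$ --- your non-bipartiteness of the Cayley graph with generators $q\pm 1$ is exactly the paper's sign-propagation $\mathrm{sgn}(b_ib_{i+2})=1$ iterated around an odd cycle, and your extra extremal step ruling out ${\bf w}={\bf v}$ is not even needed, since ${\bf w}={\bf v}$ together with the linear independence of the ${\bf t}_i$ already gives $b_i=b_i'$. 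One caveat: the opening ``observation'' that fundamentality forces every $i$ to have a neighbor $j$ with $b_i+b_j=0$ is false as stated (it fails for ${\bf v}={\bf t}_i$ itself, a fundamental solution), and in case (III) the non-vanishing of ${\bf w}$ also needs the zero-propagation step (a zero vertex forces all its neighbors to be zero, hence by connectivity ${\bf v}={\bf 0}$) alongside non-bipartiteness; but neither point damages the argument, since the first claim is never used and the second is a one-line addition within your framework.
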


\begin{theorem}\label{theorem:p-qfund-podd1/2}
 If $p$ is odd, $q\ge 2$, ${\mathcal B} \subset {\Bbb Z}+1/2$
and $a_k = 0$ for $\forall k \in \{ 1,\cdots, p \}$, 
then ${\bf v}=
a_1 {\bf s}_1 + \cdots +a_p {\bf s}_p + 
b_1 {\bf t}_1 + \cdots + b_p {\bf t}_p$ 
cannot be a Q-fundamental solution. 
\end{theorem}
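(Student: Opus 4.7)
The plan is to argue by contradiction. Assume $\mathbf v$ is a Q-fundamental solution under the stated hypotheses. Applying Theorem~\ref{theorem:p-qfund}(II) to the condition $\mathcal B \subset \mathbb Z + 1/2$ together with $a_k = 0$ forces $b_i \in \{-1/2, 1/2\}$ for every $i$. I partition $\{1,\dots,p\} = S \sqcup T$ via $S = \{i : b_i = 1/2\}$ and $T = \{i : b_i = -1/2\}$; since $|S| + |T| = p$ is odd, $|S| \ne |T|$. Using Lemma~\ref{lemma:generator}, the entries of $\mathbf v$ are $v_{j,1} = 0$, $v_{j,2} = b_{j+1} + b_{p-q+j}$, and $v_{j,3} = b_j + b_{p-q+j+1}$, each a sum of two half-integers in $\{-1, 0, 1\}$. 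Non-negativity pushes each entry into $\{0, 1\}$ and translates to the combinatorial condition that $T$ is an independent set in the circulant graph $G$ on $\mathbb Z / p$ with jump set $\{q-1, q+1\}$. Because $\mathbf v$ represents a normal surface, the square condition also holds, requiring $T$ to meet every ``4-window'' $\{j-q, j-q+1, j, j+1\}$ for $j \in \mathbb Z/p$.

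The key construction is $\mathbf v' := \mathbf v - \mathbf t_k$ for some $k \in S$. Direct computation of the four affected entries $v_{k-1,2}, v_{k+q,2}, v_{k,3}, v_{k+q-1,3}$ shows that $\mathbf v' \ge \mathbf 0$ iff each of these equals $1$, iff $k$ and its four $G$-neighbours $k \pm (q-1), k \pm (q+1)$ all lie in $S$. Since $\mathbf t_k$ has integer $b$-coordinate $b_k = 1$ while $\mathbf v$ has half-integer $b$-coordinates, the conditions $\mathbf v' \ne \mathbf 0$ and $\mathbf v' \ne \mathbf v$ are automatic, so the existence of any such $k$ immediately contradicts Q-fundamentality.

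It remains to show that such a $k$ must exist, and this is the principal obstacle. Assume the contrary: every $s \in S$ has a $G$-neighbour in $T$, so $T$ dominates $S$ in $G$. Combined with the independence of $T$ in the $4$-regular graph $G$ and the window-covering condition, this yields $|S| \le 4|T|$ and $4|T| \ge p$; since $p$ is odd, $4|T| \ne p$, so $4|T| \ge p+1$, forcing at least one window to be multiply covered. For an independent $T$, the only $T$-pairs that can co-cover a window are those at cyclic distance $1$ or $q$ (the two non-trivial internal distances in a 4-window not excluded by independence), so the autocorrelation $\sum_j \binom{|T \cap W_j|}{2}$ is bounded below by $4|T| - p$ and above by $2(n_1 + n_q)$, where $n_d$ counts unordered $T$-pairs at distance $d$. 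Closing this estimate using the domination constraint on $S$ and the parity coming from odd $p$, together with a case analysis on the circulant structure of $G$ as determined by $\gcd(p, q\pm 1)$, should yield the final contradiction. Carrying out this last combinatorial step carefully — showing that the configuration space of $T$'s that are simultaneously independent, 4-window-covering, and $S$-dominating is empty for odd $p$ and $q \ge 2$ — is the hard part of the proof.
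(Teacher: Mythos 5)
Your proposal has a genuine gap at exactly the point you identify as ``the hard part.'' The reduction is set up correctly: Theorem~\ref{theorem:p-qfund}(II) gives $b_i\in\{-1/2,1/2\}$, the translation of non-negativity and the square condition into independence and window-covering of $T=\{i: b_i=-1/2\}$ in the circulant graph with jumps $q\pm1$ is right, and your computation that $\mathbf v-\mathbf t_k\ge\mathbf 0$ iff $k$ and $k\pm(q-1)$, $k\pm(q+1)$ all lie in $S$ is correct. But the existence of such a $k$ is the entire content of the theorem, and your sketch of it (the inequalities $|S|\le 4|T|$, $4|T|\ge p+1$, the autocorrelation bound, ``a case analysis on the circulant structure\dots should yield the final contradiction'') is not an argument; nothing in it is closed, and it is not clear the estimates you propose are even strong enough, since the claim is false in general for $p$ even (where valid configurations of this type do exist, e.g.\ the surfaces $\mathbf f'_2,\mathbf f'_3$ in $T(p,1)$), so the parity of $p$ must enter in an essential and quantitative way that your outline does not pin down.

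The paper avoids this entirely with a short double count of the zero entries of $\tilde{\mathbf v}$. Each index $i$ with $b_i=-1/2$ lies in exactly four entries of $\tilde{\mathbf v}$, and an entry is $0$ iff exactly one of its two summands is $-1/2$ (two would make it negative); hence the number of zero entries is $4|T|$, which is even. On the other hand, the square condition forces each of the $p$ blocks to have one or two zero entries, and the blocks with two zeros pair off (checked by the four patterns (i)--(iv)), so their number $m$ is even and the zero count is $p+m$, which is odd since $p$ is odd. Contradiction --- no such $\mathbf v$ exists at all, fundamental or not. If you want to salvage your route, the cleanest fix is to abandon the search for a decomposable $k$ and instead run this parity count on your sets $S$ and $T$; as written, your proof is incomplete.
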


\begin{lemma}\label{lemma:HakenFund}
 Let $F$ be a normal surface 
in the $(p,q)$-lens space
with the triangulation $T(p,q)$.
 If $F$ intersects each of $E_v$ and $E_h$ in a single point
and contains a normal disk of type $X_{k2}$ or $X_{k3}$,
then $F$ is fundamental with respect to Haken's matching equations.
\end{lemma}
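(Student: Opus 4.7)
The plan is to leverage the constraints $|F\cap E_v|=|F\cap E_h|=1$ to show that $F$ consists of a single quadrilateral of type $X_{k2}$ or $X_{k3}$ together with quadrilaterals only of type $X_{i1}$ for $i\neq k$ (and no T-disks at all), and then to rule out any non-trivial Haken decomposition of this rigid pattern.

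First I count the normal disks of $F$ that meet the axis $E_v$ in their interior. In any tetrahedron $\tau_i$ such disks are precisely the T-disks at $v_+$, the T-disks at $v_-$, and the Q-disks of type $X_{i2}$ or $X_{i3}$; the type $X_{i1}$ separates $E_v$ from $E_h$ and so avoids both. Because $F\cap E_v$ is a single point and the given Q-disk of type $X_{k2}$ or $X_{k3}$ already crosses $E_v$ once, every other such disk must be absent. The analogous count against $E_h$, which is crossed by T-disks at each $v_j$ and by Q-disks of type $X_{i2}$ or $X_{i3}$ (but not $X_{i1}$), eliminates all remaining T-disks as well. The square condition in $\tau_k$ finally forbids any $X_{k1}$. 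Hence the Haken coordinate of $F$ is $1$ in the entry for $X_{k2}$ or $X_{k3}$, is some non-negative integer in each $X_{i1}$ entry with $i\neq k$, and is $0$ everywhere else.

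Now suppose for contradiction that $F$ is not fundamental with respect to Haken's matching equations, so $F=G_1+G_2$ with $G_1$ and $G_2$ non-zero non-negative integer solutions. Non-negativity forces each $G_\ell$ to vanish in every coordinate where $F$ vanishes, so neither contains any T-disk, any $X_{k1}$, or any $X_{i2}$ or $X_{i3}$ beyond the single distinguished one. That distinguished quadrilateral lies in exactly one summand, say $G_1$; then $G_2$ consists purely of $X_{i1}$ disks with $i\neq k$.

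To finish I apply the matching equations to $G_2$ on the internal face $v_+v_-v_{i+1}$ shared by $\tau_i$ and $\tau_{i+1}$. On that face the arcs cutting off the vertex $v_{i+1}$ come only from $X_{i1}$ on the $\tau_i$ side and from $X_{(i+1)1}$ on the $\tau_{i+1}$ side, since every other contributor is forbidden in $G_2$. The matching equation thus collapses to $x_{i1}(G_2)=x_{(i+1)1}(G_2)$; iterating around the axis yields a common value for all $x_{i1}(G_2)$, and since $x_{k1}(G_2)=0$ this common value is zero, forcing $G_2=0$, the desired contradiction. The main technical step is precisely this arc bookkeeping: one must identify carefully which vertex each $X_{i1}$ arc cuts off on each shared face, and verify that under the restrictions inherited from $F$ no other normal disk can contribute to the same matching equation in $G_2$.
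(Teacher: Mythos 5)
There is a genuine gap at the very first step, and the rest of the argument is built on it. You claim that because $F\cap E_v$ is a single point and the given quadrilateral of type $X_{k2}$ or $X_{k3}$ ``already crosses $E_v$ once, every other such disk must be absent.'' This conflates the number of intersection points of $F$ with the edge $E_v$ with the total number of normal disks of $F$ having a corner on $E_v$. The axis $E_v$ is an edge of \emph{all} $p$ tetrahedra, so a single point of $F\cap E_v$ is a corner of exactly one normal disk in \emph{each} of the $p$ tetrahedra around it. Hence $|F\cap E_v|=1$ forces exactly one disk per tetrahedron (a T-disk at $v_+$ or $v_-$, or an $X_{i2}$ or $X_{i3}$) to meet $E_v$ --- not one disk in total. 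The same applies to $E_h$. So the rigid structure you derive for $F$ (a single $X_{k2}$ or $X_{k3}$, everything else of type $X_{i1}$, no T-disks) is false; indeed the paper applies this very lemma to the surface ${\bf h}={}^t(0\,0\,1\,|\,0\,1\,0\,|\cdots)$, which meets $E_v$ and $E_h$ once each yet carries a quadrilateral of type $X_{i2}$ or $X_{i3}$ in \emph{every} tetrahedron and has no $X_{i1}$ at all. Your subsequent decomposition analysis, which vanishes coordinatewise against this incorrect support, therefore does not get off the ground.

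The intended argument runs in the opposite direction and is shorter: intersection numbers with $E_v$ and $E_h$ are additive under Haken sum, so in a decomposition $F=F_1+F_2$ one summand, say $F_2$, must be disjoint from $E_v\cup E_h$. A normal surface disjoint from both core edges can contain only $X_{i1}$-quadrilaterals (every T-disk and every $X_{i2}$, $X_{i3}$ meets $E_v$ or $E_h$), and the matching equations then force it to be the Heegaard torus ${}^t(1,0,0\,|\cdots|\,1,0,0)$. In particular $F_2$ has an $X_{k1}$ in $\tau_k$, which cannot coexist with the $X_{k2}$ or $X_{k3}$ of $F$ by the square condition --- a contradiction. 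If you want to salvage your approach, you would first need the correct local description of $F$ (one $E_v$-meeting disk and one $E_h$-meeting disk per tetrahedron), at which point the additivity argument above is the natural way to finish.
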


 When $p$ is even and $q \ge 3$,
in the $(p,q)$-lens space
the non-orientable closed surface 
represented by the normal surface 
${\bf h} = (\sum_{k=1}^{p/2} {\bf t}_{2k-1})/2 = 
{}^t (0 \ 0 \ 1 | 0 \ 1 \ 0 | 0 \ 0 \ 1 | 0 \ 1 \ 0 |
\cdots | 0 \ 0 \ 1 | 0 \ 1 \ 0 )$ is fundamental by Lemma \ref{lemma:HakenFund}.
 However, it is not Q-fundamental 
because it is larger than ${\bf t}_i$ in Lemma \ref{lemma:generator}.
 In fact, it is not of maximal Euler characteristic.
 We see examples of compressing disks for the surface represented by ${\bf h}$
in the last section.

 The condition $a_k=0$ for $\forall k \in \{ 1, \cdots, p \}$ is very strong.
 Q-fundamental surfaces with some $a_k$ being non-zero exist.
 We show several examples of such Q-fundamental surfaces in the last section.
 Some of them are
non-orientable closed surfaces with maximal Euler characteristics,
and one of them has three parallel sheets of a normal disk of type $X_{k1}$.

\vspace{5mm}

\noindent
{\it Acknowledgement}:
 The authors would like to thank Reiko Hirano
for giving us the data of calculation of fundamental surfaces
for $T(2,1)$ and $T(3,1)$, 
and Nobue Mizubayashi
for data of Q-fundamental surfaces for $T(3,1)$.
 They were seniors of the first author at Japan Women's Univ.


\section{Proof of Theorem \ref{theorem:NonSepFund}}

 Let $F$ be a (possibly disconnected) surface in a $3$-manifold $M$.
 We say $F$ {\it separates} $M$ or $F$ is {\it separating} in $M$
if there are submanifolds $M_+$ and $M_-$ of $M$
such that (1) both $M_+$ and $M_-$ are unions of components of $M-F$,
(2) $M_+ \cup M_- = M-F$, (3) $M_+ \cap M_- = \emptyset$
and (4) $F =$\,closure\,$(M_+) \cap$\,closure\,$(M_-)$.
 If all the components of $F$ are separating in $M$,
then $F$ is separating.
 Hence $F$ contains a component which is non-separating in $M$
when $F$ is non-separating.
 However, the converse is not true,
a disjoint union of two parallel copies of a non-separating two sided surface
is separating.

 Let $M$ be a compact $3$-manifold possibly with boundary,
and $F$ a surface properly embedded in $M$.
 We say $F$ is {\it geometrically compressible}
if there is an embedded disk $D$, called a {\it compressing disk}, in $M$ 
with $D \cap F = \partial D$
such that the circle $\partial D$ does not bound a disk in $F$.
 Otherwise, $F$ is {\it geometrically incompressible}.

 Suppose that a triangulation $T$ of $M$ is given.
 Then, as is well-known, 
a geometrically incompressible surface can be isotoped
so that it is deformed into a normal surface with respect to $T$.
 We can apply the next two lemmas 
to a non-orientable closed surface with maximal Euler characteristic
in a lens space.
 Note that $F$ is one sided or two sided in $M$ 
in the next two lemmas.

\begin{lemma}\label{lemma:incompressible}
 Let $M$ be a compact $3$-manifold (possibly with boundary),
and $F$ a non-separating surface properly embedded in $M$.
 If $F$ is of maximum Euler characteristic 
among all the non-separating surfaces properly embedded in $M$,
then $F$ is geometrically incompressible.
\end{lemma}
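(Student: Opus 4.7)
The plan is to argue by contradiction: suppose that $F$ admits a compressing disk $D$ and construct a non-separating surface $F'$ with $\chi(F') > \chi(F)$.

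First I would form $F'$ by the standard compression, namely remove from $F$ the annular neighborhood $A = \partial D \times [-1,1]$ of $\partial D$ in $F$, and glue in two parallel copies $D_{-1}$, $D_{+1}$ of $D$ sitting on the boundary of a regular neighborhood $N(D) = D \times [-1,1]$ of $D$ in $M$. A direct Euler characteristic count (the removed annulus has $\chi = 0$ and the two capping disks contribute total $\chi = 2$) gives $\chi(F') = \chi(F) + 2$.

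Next I would show that $F'$ is again non-separating in the sense defined at the beginning of this section. The cleanest route is to note that $F$ and $F'$ represent the same class in $H_2(M,\partial M;{\Bbb Z}/2)$: working modulo $2$, one has $F + F' = A + D_{-1} + D_{+1} = \partial(D \times [-1,1])$, so $[F'] = [F]$. The paper's definition of ``separating'' for a properly embedded (possibly disconnected) surface is in turn equivalent to the vanishing of this mod-$2$ class: a separation $M \setminus F = M_+ \sqcup M_-$ satisfying conditions (1)--(4) exhibits $F$ as the mod-$2$ boundary of $M_+$, while conversely any $3$-chain whose mod-$2$ boundary equals $F$ can be taken to be a union of components of $M \setminus F$, yielding suitable $M_+$ and $M_-$. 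Since $F$ is non-separating, $[F] \neq 0$, hence $[F'] \neq 0$, so $F'$ is non-separating.

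Therefore $F'$ is a non-separating properly embedded surface in $M$ with $\chi(F') = \chi(F) + 2 > \chi(F)$, contradicting the maximality of $\chi(F)$. The contradiction shows that no compressing disk can exist, i.e.\ $F$ is geometrically incompressible. The main technical point I anticipate is this last equivalence, between the paper's geometric definition of separating and the vanishing of the mod-$2$ homology class, particularly because the paper stresses that for disconnected surfaces the naive component-wise notion is too weak (two parallel copies of a non-separating torus already separate). A purely geometric alternative that avoids homology altogether would be to pick a loop $\gamma$ meeting $F$ transversely in an odd number of points, isotope $\gamma$ off the annulus $A$, and observe that $|\gamma \cap F'| = |\gamma \cap F| + 2|\gamma \cap D|$, so that the parity of the intersection is preserved and $F'$ is again non-separating.
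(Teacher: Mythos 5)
Your proof is correct and follows the same overall strategy as the paper's: compress $F$ along $D$, observe $\chi(F')=\chi(F)+2$, and show that $F'$ is still non-separating, contradicting maximality. The difference lies entirely in how the last claim is justified. The paper argues directly from its definition of separating: if $F'$ separated $M$ into $M'_+$ and $M'_-$ with $N(D)\subset\,$closure$(M'_+)$, then $F$ would separate $M$ into $M'_+-N(D)$ and int$(N(D)\cup M'_-)$ --- essentially a one-line surgery on the regions. You instead pass through the class $[F]\in H_2(M,\partial M;{\Bbb Z}/2)$, using $F+F'=\partial(D\times[-1,1])$ mod $2$. Both routes work. Yours is the more structural one and gives more for free (any two surfaces differing by a mod-$2$ boundary are simultaneously separating or non-separating), but it carries the burden --- which you correctly identify --- of proving that the paper's ad hoc definition of ``separating'' for possibly disconnected, possibly one-sided surfaces coincides with the vanishing of the mod-$2$ relative class. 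That equivalence is true (two-color $M-F$ by the parity of intersections of paths from a basepoint with $F$; condition (4) holds because crossing $F$ transversely flips the parity), but it is a lemma in its own right that you would need to write out. Note also that your ``purely geometric alternative,'' producing a loop $\gamma$ with $|\gamma\cap F|$ odd, implicitly relies on the same equivalence to obtain $\gamma$ in the first place, so it does not actually avoid the homological input. The paper's region-surgery argument sidesteps all of this and is the shorter path given the definition in force.
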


\begin{proof}
 Suppose, for a contradiction, 
that $F$ is geometrically compressible.
 We perform a compressing operation on $F$ along a compressing disk $D$.
 That is, we take a tubular neighbourhood $N(D) \cong D \times I$ of $D$
so that $N(D) \cap F = (\partial D) \times I$,
and set $F'=(F-((\partial D) \times I)) \cup (D \times \partial I)$.
 The resulting compressed surface $F'$ 
is of larger Euler characteristic than $F$ by two.
 In fact, $F'$ is non-separating.
 Suppose not.
 Then $F'$ separates $M$ into two regions $M'_+$ and $M'_-$.
 Without loss of generality, 
we can assume that $N(D) \subset$\,closure\,$(M'_+)$.
 Then $F$ separates $M$ into two regions
$M'_+ - N(D)$ and int($N(D) \cup M'_-$).
 This is a contradiction.
\end{proof}

\begin{lemma}\label{lemma:switch}
 Let $M$ be a compact $3$-manifold (possibly with boundary),
and $T$ a triangulation of $M$.
 If $F = F_1 + F_2$
where $F, F_1, F_2$ are (possibly disconnected) surfaces
that are normal with respect to $T$
and $F_1 + F_2$ denotes the Haken sum,
and if $F$ is non-separating in $M$,
then either $F_1$ or $F_2$ is non-separating.
\end{lemma}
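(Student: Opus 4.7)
The plan is to prove the lemma by contrapositive: assuming each of $F_1$ and $F_2$ separates $M$, I will construct a $3$-dimensional subset $A' \subset M$ whose topological frontier is exactly $F = F_1 + F_2$, thereby exhibiting $F$ as separating. Since $F_i$ separates, I pick for each $i \in \{1,2\}$ one of the two open sides $A_i \subset M - F_i$, so that the characteristic function $\chi_{A_i}$ is locally constant on $M - F_i$ and jumps across $F_i$.

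First I would form $A = A_1 \triangle A_2$, the symmetric difference of the chosen sides; its characteristic function is $\chi_{A_1} + \chi_{A_2} \pmod 2$, which is locally constant on $M-(F_1 \cup F_2)$ and jumps precisely across $F_1 \cup F_2$, so the topological frontier of $A$ in $M$ is $F_1 \cup F_2$. Since the Haken sum $F$ and the set $F_1 \cup F_2$ agree away from the double curves $F_1 \cap F_2$, the set $A$ already has the correct frontier away from tubular neighbourhoods of these double curves. It therefore suffices to repair $A$ inside a tubular neighbourhood $N(\gamma)$ of each component $\gamma$ of $F_1 \cap F_2$, so that on $N(\gamma)$ the new frontier is the Haken resolution of the cross $F_1\cup F_2$ rather than the cross itself.

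The local repair is the main geometric step. In a transverse disc $D^2$ to $\gamma$, with coordinates such that $F_1, F_2$ trace the $x$- and $y$-axes and $A_1 = \{x>0\}$, $A_2 = \{y>0\}$, the slice of $A_1\triangle A_2$ is the pair of opposite quadrants $\{x>0,y<0\} \cup \{x<0,y>0\}$, while the slice of $F$ is one of the two smoothed arc pairs $\{xy = \pm\varepsilon\}$ fixed by the Haken convention. A direct planar computation shows that the symmetric difference of the opposite-quadrant region with a suitably chosen ``pillow'' $P \subset D^2$ (two opposite wedges near the origin) is a region whose frontier in $D^2$ equals the chosen smoothed arcs. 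Taking the product $P \times \gamma \subset N(\gamma)$ over each double curve and setting $A' = A \triangle \bigl(\bigsqcup_\gamma P\times\gamma\bigr)$ then yields the desired region whose frontier is $F$, realising the separation of $M$ by $F$ into $A'$ and $\overline{M - A'}$. The main obstacle will be the case check selecting $P$ compatibly with the Haken smoothing convention: depending on which of $\{xy = \pm\varepsilon\}$ the Haken rule picks, one places $P$ either inside or outside $A_1 \triangle A_2$ in the disc, but the planar identity $\partial P = (\mathrm{cross}) \triangle (\mathrm{smoothing})$ goes through identically in both cases, and the product structure of $N(\gamma)$ then lifts everything to three dimensions without further trouble.
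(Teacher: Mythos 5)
Your proposal is correct and is essentially the paper's own argument: the paper also assumes both $F_1$ and $F_2$ separate and observes that each regular switch along a double curve joins only regions of equal ``parity,'' so that $F$ separates $M$ into $(M_1^+\cap M_2^+)\cup(M_1^-\cap M_2^-)$ and its complement $(M_1^+\cap M_2^-)\cup(M_1^-\cap M_2^+)$ --- the latter being exactly your symmetric difference $A_1\triangle A_2$ after the local repair at the double curves. Your pillow computation just makes the paper's one-line switch observation explicit in local coordinates.
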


\begin{proof}
 Suppose, for a contradiction, that 
$F_i$ separates $M$ into two submanifolds $M_i^+$ and $M_i^-$
for $i=1$ and $2$.
 Each switch along an intersection curve of $F_1 \cap F_2$
joins either $M_1^+ \cap M_2^+$ and $M_1^- \cap M_2^-$
or $M_1^+ \cap M_2^-$ and $M_1^- \cap M_2^+$.
 Hence $F$ separates $M$ into the two regions
$(M_1^+ \cap M_2^+) \cup (M_1^- \cap M_2^-)$
and $(M_1^+ \cap M_2^-) \cup (M_1^- \cap M_2^+)$.
 This is a contradiction.
\end{proof}

 The next Propositions \ref{proposition:Fund} and \ref{proposition:Q-Fund}
show Theorem \ref{theorem:NonSepFund}.

\begin{proposition}\label{proposition:Fund}
 Let $M$ be a closed $3$-manifold with a triangulation $T$.
 If $M$ contains a non-separating closed surface $F$,
then it contains a non-separating closed surface $F'$ 
such that $F'$ is fundamental with respect to $T$
and has Euler characteristic $\chi(F') \ge \chi(F)$.
\end{proposition}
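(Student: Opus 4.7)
The plan is to reduce to a non-separating closed surface of maximum Euler characteristic, then minimize weight among normal surfaces meeting the Euler-characteristic bound, and finally invoke Lemma~\ref{lemma:switch} to rule out decomposability. Since $\chi \le 2$ for any closed surface, the supremum $\chi_{\max}$ of $\chi$ over non-separating closed surfaces in $M$ is attained; by Lemma~\ref{lemma:incompressible} a maximizer is geometrically incompressible and hence isotopic to a normal surface by standard normalization. This shows that the set $\mathcal{T}$ of non-separating closed normal surfaces $F'$ in $M$ with $\chi(F') \ge \chi(F)$ is nonempty.

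I would then select $F' \in \mathcal{T}$ of minimum weight (the sum of its Haken-normal coordinates) and claim $F'$ is fundamental. Suppose for contradiction it is not. Then $v(F')$ decomposes as $v_1+v_2$ with both summands nonzero non-negative integer solutions of the matching equations, realized geometrically as a Haken sum $F'=F_1+F_2$. By Lemma~\ref{lemma:switch} at least one summand, say $F_1$, is non-separating. Euler characteristic is additive under Haken sum, so $\chi(F_1)+\chi(F_2)=\chi(F')\ge \chi(F)$; in the easy case $\chi(F_1)\ge \chi(F)$ we have $F_1\in\mathcal{T}$ of weight strictly less than that of $F'$, contradicting minimality.

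The main obstacle is the residual case $\chi(F_1)<\chi(F)$, which forces $\chi(F_2)>0$, so $F_2$ has a connected component $G$ that is a sphere or a projective plane. I plan to handle this by a component-level adjustment. If $G$ is itself non-separating in $M$ then $G\in\mathcal{T}$ (using $\chi(G)\in\{1,2\}$; the borderline configuration $\chi(F)=2$ is handled separately by noting that $F$ is then already a non-separating sphere, for which a direct minimum-weight sphere argument suffices), and $G$ has weight at most that of $F_2$, which is strictly less than that of $F'$, the desired contradiction. If $G$ is separating, its class in $H_2(M;\mathbb{Z}/2)$ vanishes, so one may re-form the Haken decomposition by setting $\tilde F_1=F_1+G$ and $\tilde F_2=F_2\setminus G$; the surface $\tilde F_1$ is still non-separating and $\chi(\tilde F_1)=\chi(F_1)+\chi(G)$. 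Iterating this absorption over all positive-Euler-characteristic separating components of $F_2$ terminates with $\chi(\tilde F_2)\le 0$, so $\chi(\tilde F_1)\ge \chi(F')\ge \chi(F)$ and $\tilde F_1\in\mathcal{T}$ has strictly smaller weight than $F'$. Either way we contradict the minimality of $F'$, so $F'$ must be fundamental, which together with $F'\in\mathcal{T}$ completes the proof.
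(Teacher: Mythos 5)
Your overall strategy (minimize weight, extract a non-separating summand via Lemma \ref{lemma:switch}, and derive a weight contradiction) matches the paper's, but the way you control Euler characteristic is different and is where the argument breaks. The paper takes $F'$ of \emph{maximal} Euler characteristic among non-separating surfaces (hence geometrically incompressible by Lemma \ref{lemma:incompressible}) and of least weight among such normal surfaces, puts $F'=F_1+F_2$ in reduced form, and invokes Lemma 2.1 of Jaco--Oertel to conclude the sum has no disk patches; since every patch then has $\chi\le 0$, both summands satisfy $\chi(F_i)\ge\chi(F')$, so the non-separating summand lies in the same minimizing class with strictly smaller weight. You instead minimize weight over the larger class $\mathcal{T}=\{\chi\ge\chi(F)\}$ and try to recover the Euler characteristic bound by hand, and two steps do not close.

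First, your absorption argument only yields a weight decrease when $\tilde F_2\ne\emptyset$. If every component of $F_2$ has positive Euler characteristic and is separating (for instance, $F_2$ a disjoint union of separating normal $2$-spheres), then absorbing all of them gives $\tilde F_1=F_1+F_2=F'$, so $w(\tilde F_1)=w(F')$ and no contradiction results; nothing in your setup rules this configuration out, precisely because $F'$ was not chosen incompressible or of maximal $\chi$. Second, the borderline case $\chi(F)=2$ is deferred to "a direct minimum-weight sphere argument" that is not supplied: a least-weight non-separating normal surface with $\chi\ge 2$ can a priori decompose with summands that are not spheres, so this case needs the same no-disk-patch input as the general one rather than a routine aside. (Your homological steps are fine: a closed surface is separating iff its class in $H_2(M;\mathbb{Z}/2)$ vanishes, and that class is additive under Haken sum, so $\tilde F_1$ is indeed non-separating.) Both gaps disappear if you strengthen the choice of $F'$ as the paper does and use the Jaco--Oertel lemma, which delivers exactly the inequality $\chi(F_1),\chi(F_2)\ge\chi(F')$ that your case analysis is attempting to substitute for.
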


\begin{proof}
 We use Lemma 2.1 in \cite{JO}.
 See section 2 of \cite{JO}
for definitions of terminologies 
such as reduced form, patches and so on.
 Though handle decompositions of $3$-manifolds are considered in \cite{JO},
the arguments in the proof of Lemma 2.1 are valid also for triangulations.

 Let $F''$ be a non-separating surface in $M$
such that it has maximal Euler characteristic
among all the non-separating surfaces in $M$.
 Then $F''$ is geometrically incompressible by Lemma \ref{lemma:incompressible}.
 Hence an adequate isotopy deforms $F''$ 
to a normal surface with respect to $T$.
 Among all the non-separating surfaces with maximal Euler characteristic
which are normal with respect to $T$,
let $F'$ be one with $w(F')=|F' \cap T^{(1)}|$ minimal,
where $|F' \cap T^{(1)}|$ is the number of intersection points 
of $F'$ and the $1$-skelton of $T$.

 We will show that $F'$ is fundamental.
 Suppose, for a contradiction, that $F'$ is not.
 Then there are normal surfaces $F_1, F_2$ with $F' = F_1 + F_2$.
 We can assume, without loss of generality,
that this Haken sum is in reduced form.
 By Lemma 2.1 in \cite{JO}, 
the Haken sum $F_1 + F_2$ has no disk patches.
 Then both $F_1$ and $F_2$ has Euler characteristics 
larger than or equal to that of $F'$.
 Moreover, 
either $F_1$ or $F_2$, say $F_1$ is non-separating by Lemma \ref{lemma:switch}.
 Since $w(F') = w(F_1) + w(F_2) > w(F_1)$,
we obtain a contradiction to the minimality of $w(F)$.
\end{proof}

\begin{proposition}\label{proposition:Q-Fund}
 Let $M$ be a closed $3$-manifold with a triangulation $T$.
 If $M$ contains a non-separating closed surface $F$,
then it contains a non-separating closed surface $F'$ 
such that $F'$ is Q-fundamental with respect to $T$.
\end{proposition}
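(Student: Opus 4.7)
The plan is to run the same kind of minimization argument as in Proposition~\ref{proposition:Fund}, but now adapted to Q-coordinates: the Haken matching equations are replaced by the Q-matching equations, the ``no disk patch'' reduced-form input is replaced by Tollefson's uniqueness theorem for Q-coordinates, and the weight $w$ is replaced by the sum of the entries of the Q-coordinate. The guiding observation is that although a Q-coordinate only determines a normal surface up to trivial components, the trivial components in our closed triangulated setting are vertex-linking $2$-spheres, which are separating and therefore invisible to the property of being non-separating.

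First I would produce a non-separating normal surface in $M$. One way is to invoke Proposition~\ref{proposition:Fund} to obtain a non-separating Haken-fundamental surface, which is automatically normal; alternatively, Lemma~\ref{lemma:incompressible} applied to a non-separating surface of maximum Euler characteristic yields a geometrically incompressible surface that can be isotoped into normal form without destroying non-separating-ness. Discarding any trivial components (which are separating) preserves non-separating-ness, so among all non-separating normal surfaces in $M$ with no trivial components we may choose one, call it $F'$, whose Q-coordinate ${\bf v}=Q(F')$ has minimum sum of entries.

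Next I would show $F'$ is Q-fundamental. If not, there is an integral Q-solution ${\bf v}'$ with ${\bf 0}<{\bf v}'<{\bf v}$; set ${\bf v}_1={\bf v}'$ and ${\bf v}_2={\bf v}-{\bf v}'$, both non-zero non-negative integral Q-solutions whose coordinate sums are strictly less than that of ${\bf v}$. Since ${\bf v}_i\le{\bf v}$ and ${\bf v}$ satisfies the square condition, so do ${\bf v}_1$ and ${\bf v}_2$, and Tollefson's theorem produces unique normal surfaces $F_1,F_2$ without trivial components realizing them. The Haken sum $F_1+F_2$ has Q-coordinate ${\bf v}_1+{\bf v}_2={\bf v}$, so by the same uniqueness statement $F_1+F_2=F'\cup T$ for some (possibly empty) disjoint union $T$ of vertex-linking $2$-spheres. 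Since $T$ is separating and $F'$ is non-separating, $F_1+F_2$ is non-separating, and Lemma~\ref{lemma:switch} then forces at least one of $F_1,F_2$ to be non-separating. That surface has strictly smaller Q-coordinate sum than $F'$, contradicting the choice of $F'$.

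The main obstacle, and really the only non-routine step, is the identification $F_1+F_2=F'\cup T$. It hinges on the additivity of Q-coordinates under Haken sum together with Tollefson's theorem that a non-zero non-negative integral Q-solution satisfying the square condition determines a unique normal surface once vertex-linking components are stripped away. With this in place, the remainder is a clean transcription of the minimization used in Proposition~\ref{proposition:Fund}, with the Q-coordinate sum playing the role of $w$ and Lemma~\ref{lemma:switch} supplying the crucial step of passing non-separating-ness from a Haken sum to one of its summands.
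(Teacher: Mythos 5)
Your proposal is correct and follows essentially the same route as the paper: minimize the number of Q-disks over non-separating normal surfaces, decompose a putative non-fundamental $F'$ as $F'+\Sigma=F_1+F_2$ via Tollefson's uniqueness-up-to-trivial-components theorem, and apply Lemma \ref{lemma:switch} to transfer non-separating-ness to a summand of strictly smaller Q-weight. The only difference is that you spell out the derivation of the Haken-sum decomposition from the Q-coordinate identity ${\bf v}={\bf v}_1+{\bf v}_2$, which the paper leaves implicit by citing familiarity with Tollefson's paper.
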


\begin{proof}
 We assume that the readers have good familiality with the paper \cite{T}.

 As in the proof of Proposition \ref{proposition:Fund},
$M$ contains a normal non-separating closed surface.
 Let $F'$ be one with minimal number of Q-disks 
among all the non-separating surfaces which are normal.
 We will show that $F'$ is Q-fundamental.

 Assume, for a contradiction, that $F'$ is not Q-fundamental.
 Then there are non-trivial normal surfaces $F_1, F_2$ 
and a union of trivial normal surface $\Sigma$
such that $F' + \Sigma = F_1 + F_2$.
 (Recall that a normal surface is called trivial 
if it consists of trigonal normal disks and has no quadrilateral normal disks,
and that a normal surface is determined by a Q-coordinate
up to trivial components.)
 Since $F'$ is non-separating,
$F' + \Sigma = F' \sqcup \Sigma$ is also non-separating in $M$.
 If $F_1$ and $F_2$ are both separating,
then Lemma \ref{lemma:switch} shows that $F' + \Sigma$ is separating,
which is a contradiction.
 Hence $F_1$ or $F_2$, say $F_1$ is non-separating,
and has a non-separating component $F^*$.
 However, $F^*$ has smaller or equal number of Q-disks than $F_1$, 
and $F_1$ has strictly smaller number of Q-disks than $F_1 + F_2 = F + \Sigma$.
 This is a contradiction.
\end{proof}


\section{The (2,1)-lens space}

 In this section we prove Theorem \ref{theorem:main} (1).
 For the triangulation $T(2,1)$ of the $(2,1)$-lens space,
the senses of the types the quadrilateral disks $X_{k1}, X_{k2}, X_{k3}$ 
in $\tau_i$
are as below, 
where $\{ j,k \} = \{ 1,2 \}$
and $\epsilon_{k,ki}$, $\epsilon_{j,ki}$, 
$\epsilon_{E_h,ki}$, $\epsilon_{E_V,ki}$
denote the senses of $X_{ki}$
with respect to the edges $e_k$, $e_j$, $E_h$, $E_v$ respectively.
\newline
$\begin{array}{lll}
\epsilon_{k,k1}=-2, & \epsilon_{k,k2}=2, & \epsilon_{k,k3}=0, \\
\epsilon_{j,k1}=2, & \epsilon_{j,k2}=0, & \epsilon_{j,k3}=-2, \\
\epsilon_{E_h,k1}=0, & \epsilon_{E_h,k2}=-1, & \epsilon_{E_h,k3}=1, \\
\epsilon_{E_v,k1}=0, & \epsilon_{E_v,k2}=-1, & \epsilon_{E_v,k3}=1 \\
\end{array}$
\newline
 Then the coefficient matrix of the Q-matching equations is as below.
\newline
$\left( \begin{array}{ccc|ccc}
\epsilon_{1,11}   & \epsilon_{1,12}   & \epsilon_{1,13}   &
\epsilon_{1,21}   & \epsilon_{1,22}   & \epsilon_{1,23}   \\
\epsilon_{2,11}   & \epsilon_{2,12}   & \epsilon_{2,13}   &
\epsilon_{2,21}   & \epsilon_{2,22}   & \epsilon_{2,23}   \\
\epsilon_{E_h,11} & \epsilon_{E_h,12} & \epsilon_{E_h,13} &
\epsilon_{E_h,21} & \epsilon_{E_h,22} & \epsilon_{E_h,23} \\
\epsilon_{E_v,11} & \epsilon_{E_v,12} & \epsilon_{E_v,13} &
\epsilon_{E_v,21} & \epsilon_{E_v,22} & \epsilon_{E_v,23} \\
\end{array} \right)
=
\left( \begin{array}{ccc|ccc}
-2 & 2 & 0 & 2 & 0 &-2 \\
 2 & 0 &-2 &-2 & 2 & 0 \\
 0 &-1 & 1 & 0 &-1 & 1 \\
 0 &-1 & 1 & 0 &-1 & 1 \\
\end{array} \right)$
\newline 
 Adding half of the sum of the first row and the second row
to the third row and to the fourth row,
we obtain the matrix
\newline
$\left( \begin{array}{ccc|ccc}
-2 & 2 & 0 & 2 & 0 &-2 \\
 2 & 0 &-2 &-2 & 2 & 0 \\
 0 & 0 & 0 & 0 & 0 & 0 \\
 0 & 0 & 0 & 0 & 0 & 0 \\
\end{array} \right)$.
\newline 
 The dimension of the solution space is $6-2=4$
since the rank of the matrix is $2$.
 The four vectors below together form a basis of the solution space
in ${\Bbb R}^6$.
\newline
${\bf v}_a= {}^t(1,1,1\ |\ 0,0,0)$, 
${\bf v}_b= {}^t(0,0,0\ |\ 1,1,1)$, 
${\bf v}_c= {}^t(0,1,0\ |\ 0,0,1)$, 
${\bf v}_d= {}^t(0,0,1\ |\ 0,1,0)$ 

 In the rest, we determine all the Q-fundamental surfaces 
for the triangulation $T(2,1)$ of the $(2,1)$-lens space.
 Set ${\bf v} = a{\bf v}_a + b{\bf v}_b + c{\bf v}_c + d{\bf v}_d =
{}^t (a,\ a+c,\ a+d \ |\ b,\ b+d,\ b+c)$
with $a,b,c,d \in {\Bbb R}$,
the general solution of the matching equations.
 We consider when ${\bf v}$ represents a Q-fundamental surface.

\vspace{2mm}
\noindent
{\bf Case of $a>0$}: 
 Since the first entry is positive,
the second and third entries are $0$ by the square condition.
 Then we get $c=d=-a$,
and fourth and the fifth entries are $b-a$.
 The square condition implies these entires are $0$
(otherwise, both of them would be positive),
and we have $b=a$ and 
${\bf v}={}^t (a,0,0\ |\ a,0,0)$. 
$a \ge 1$ implies 
${\bf v}={}^t (a,0,0\ |\ a,0,0) \ge {}^t (1,0,0\ |\ 1,0,0)$.
 If ${\bf v}$ is Q-fundamental, 
then ${\bf v}={}^t (1,0,0\ |\ 1,0,0)$.
 Let ${\bf f}_1$ denote this vector, 
which is a candidate of a Q-fundamental solution.

\vspace{2mm}
\noindent
{\bf Case of $a=0$}: 
 In this case, the second, the third and the fourth entries 
are $d,c$ and $b$ respectively. 
 Hence $b,c,d$ are non-negative integers.
 Thus, if ${\bf v} = b{\bf v}_b + c{\bf v}_c + d{\bf v}_d$
represents a Q-fundamental solution,
then $(b,c,d)=(1,0,0), (0,1,0)$ or $(0,0,1)$,
and ${\bf v}={\bf v}_b, {\bf v}_c$ or ${\bf v}_d$.
 ${\bf v}_b$ violates the square condition.
 We set ${\bf f}_2={\bf v}_c=(0,1,0\ |\ 0,0,1)$, 
${\bf f}_3={\bf v}_d = (0,0,1\ |\ 0,1,0)$
as in Theorem \ref{theorem:main} (i). 
 Note that $\pi$-rotation about the axis $E_v$ carries 
${\bf f}_3$ to ${\bf f}_2$.

 Actually, the three vectors ${\bf f}_1, {\bf f}_2, {\bf f}_3$ 
are all Q-fundamental solutions.
 To see this,
for any vector ${\bf x}$,
let size$({\bf x})$ be the sum of all the elements of ${\bf x}$.
 Since size$({\bf f}_1)=2$, size$({\bf f}_2)=2$ and size$({\bf f}_3)=2$, 
each of the three vectors can't be presented
as a non-trivial linear combination of other candidates
with non-negative integer coefficients.

 ${\bf f}_1$ represents a Heegaard splitting torus 
which surrounds the core circle $E_v$. 
 ${\bf f}_2$ and ${\bf f}_3$ represent projective planes.
 $2{\bf f}_2$ and $2{\bf f}_3$ represent the inessential $2$-spheres
surrounding the edges $e_2$ and $e_1$ respectively.


\section{$(p,1)$-lens spaces}

 In this section,
we prove Theorem \ref{theorem:main} (2),
that is,
we determine all the Q-fundamental surfaces
for the triangulation $T(p,1)$ of the $(p,1)$-lens space with $p \ge 3$.

\begin{figure}[htbp]
\begin{center}
\includegraphics[width=10cm]{s-numberQdiskp1.eps}
\end{center}
\caption{}
\label{fig:numberQdiskp1}
\end{figure}

 For $T(p,1)$, 
the senses of $X_{i1}, X_{i2}, X_{i3}$
with respect to an edge $e$
are as below.
\newline
$\epsilon_{e,i1}=
\left\{ \begin{array}{l}
1\ {\rm if}\ e=e_{i-1} \\
-2\ {\rm if}\ e=e_i \\
1\ {\rm if}\ e=e_{i+1} \\
0\ {\rm if}\ e=E_h \\
0\ {\rm if}\ e=E_v \\
0\ {\rm otherwise} \\
\end{array} \right.\ $
$\epsilon_{e,i2}=
\left\{ \begin{array}{l}
0\ {\rm if}\ e=e_{i-1} \\
2\ {\rm if}\ e=e_i \\
0\ {\rm if}\ e=e_{i+1} \\
-1\ {\rm if}\ e=E_h \\
-1\ {\rm if}\ e=E_v \\
0\ {\rm otherwise} \\
\end{array} \right.\ $
$\epsilon_{e,i3}=
\left\{ \begin{array}{l}
-1\ {\rm if}\ e=e_{i-1} \\
0\ {\rm if}\ e=e_i \\
-1\ {\rm if}\ e=e_{i+1} \\
1\ {\rm if}\ e=E_h \\
1\ {\rm if}\ e=E_v \\
0\ {\rm otherwise} \\
\end{array} \right.\ $
\newline
where suffix numbers are considered modulo $p$.
 Note that $\epsilon_{i,i1}=-2$ 
since the tetrahedron $\tau_i$ has two copies of the edge $e_i$,
and the disk of type $i1$ intersects each of the copies 
at a single point with negative sign.
 Then we obtain the coefficient matrix of the Q-matching equations 
for the $(p,1)$-lens space as below.
\newline
$\left( \begin{array}{ccc|ccc|ccc|ccc|ccc|ccc}
-2 & 2 & 0 & 1 & 0 &-1 & 0 & 0 & 0 &   &\cdots &   & 0 & 0 & 0 & 1 & 0 &-1 \\
 1 & 0 &-1 &-2 & 2 & 0 & 1 & 0 &-1 & 0 & 0 & 0 &   &\cdots &   & 0 & 0 & 0 \\
 0 & 0 & 0 & 1 & 0 &-1 &-2 & 2 & 0 & 1 & 0 &-1 &   &   &   &   &   &   \\
   &   &   &   &   &   &   &\ddots &   &   & \ddots&   &   & \ddots&   &   \\
   &   &   &   &   &   &   &   &   & 1 & 0 &-1 &-2 & 2 & 0 & 1 & 0 &-1 \\
 1 & 0 &-1 &   &   &   &   &   &   &   &   &   & 1 & 0 &-1 &-2 & 2 & 0 \\
 0 &-1 & 1 & 0 &-1 & 1 & 0 &-1 & 1 &   &\cdots &   & 0 &-1 & 1 & 0 &-1 & 1 \\
 0 &-1 & 1 & 0 &-1 & 1 & 0 &-1 & 1 &   &\cdots &   & 0 &-1 & 1 & 0 &-1 & 1 \\
\end{array} \right)$
 Let $r_i$ be the $i$-th row.
 Adding $(r_1 + r_2 + \cdots + r_p)/2$
to the $(p+1)$-st row and to the $(p+2)$-nd row,
we can reduce the matrix as below.
\newline
$\left( \begin{array}{ccc|ccc|ccc|ccc|ccc|ccc}
-2 & 2 & 0 & 1 & 0 &-1 & 0 & 0 & 0 &   &\cdots &   & 0 & 0 & 0 & 1 & 0 &-1 \\
 1 & 0 &-1 &-2 & 2 & 0 & 1 & 0 &-1 & 0 & 0 & 0 &   &\cdots &   & 0 & 0 & 0 \\
 0 & 0 & 0 & 1 & 0 &-1 &-2 & 2 & 0 & 1 & 0 &-1 &   &   &   &   &   &   \\
   &   &   &   &   &   &   &\ddots &   &   & \ddots&   &   & \ddots&   &   \\
   &   &   &   &   &   &   &   &   & 1 & 0 &-1 &-2 & 2 & 0 & 1 & 0 &-1 \\
 1 & 0 &-1 &   &   &   &   &   &   &   &   &   & 1 & 0 &-1 &-2 & 2 & 0 \\
 0 & 0 & 0 & 0 & 0 & 0 & 0 & 0 & 0 &   &\cdots &   & 0 & 0 & 0 & 0 & 0 & 0 \\
 0 & 0 & 0 & 0 & 0 & 0 & 0 & 0 & 0 &   &\cdots &   & 0 & 0 & 0 & 0 & 0 & 0 \\
\end{array} \right)$
\newline
 The rank of this matrix is $p$ 
since the $2$nd element of the $i$-th block 
(the $(3(i-1)+2)$-nd element) 
is non-zero 
only for the $i$-th row vector
for $1 \le i \le p$. 
 Hence the dimension of the solution space is $3p-p=2p$.
 We obtain a basis ${\bf s}'_1, {\bf s}'_2, \cdots, {\bf s}'_p$,
${\bf t}'_1, {\bf t}'_2, \cdots, {\bf t}'_p$ of the solution space
considered in ${\Bbb R}^{3p}$, where
\newline
${\bf s}'_1 = {}^t (1,1,1\ |\ 0,0,0\ | 0,0,0\ |\ \cdots |\ 0,0,0\ |\ 0,0,0)$,
\newline
${\bf s}'_2 = {}^t (0,0,0\ |\ 1,1,1\ | 0,0,0\ |\ \cdots |\ 0,0,0\ |\ 0,0,0)$,
\newline
$\vdots$
\newline
${\bf s}'_p = {}^t (0,0,0\ |\ 0,0,0\ | 0,0,0\ |\ \cdots |\ 0,0,0\ |\ 1,1,1)$,
\newline
${\bf t}'_1 
={}^t(0,0,2\ |\ 0,1,0\ | 0,0,0\ |\ 0,0,0\ |\ \cdots 0,0,0\ |\ 0,0,0\ |\ 0,1,0)$,
\newline
${\bf t}'_2 
={}^t(0,1,0\ |\ 0,0,2\ | 0,1,0\ |\ 0,0,0\ |\ \cdots 0,0,0\ |\ 0,0,0\ |\ 0,0,0)$,
\newline
$\vdots$
\newline
${\bf t}'_p 
={}^t(0,1,0\ |\ 0,0,0\ | 0,0,0\ |\ 0,0,0\ |\ \cdots 0,0,0\ |\ 0,1,0\ |\ 0,0,2)$.
\newline
 Note that ${\bf s}'_i$ and ${\bf t}'_i$ are obtained 
from ${\bf s}'_1$ and ${\bf t}'_1$ respectively
by ($2\pi(i-1)/p$)-rotation about the axis $E_v$. 
 Each ${\bf s}'_i$ is a solution of the Q-matching equations
for any triangulation of any $3$-manifold.
 (The sum of 
the three columns of the $k$-th block
(the $(3(k-1)+1)$-st, the $(3(k-1)+2)$-nd and the $(3(k-1)+3)$-rd columns)
is zero
for $1 \le k \le p$.)

\begin{figure}[htbp]
\begin{center}
\includegraphics[width=10cm]{s-inessS2forP1.eps}
\end{center}
\caption{}
\label{fig:inessS2forP1}
\end{figure}

 ${\bf t}'_i$ represents the normal surface
which is the inessential $2$-sphere surrounding the edge $e_i$
(see Figure \ref{fig:inessS2forP1}),
and hence is a solution of the Q-matching equations.
 We shall see these vectors are linearly independent.
 We set the general solution 
\newline
${\bf v} = a_1 {\bf s}'_1 + a_2 {\bf s}'_2 + \cdots + a_p {\bf s}'_p
+ b_1 {\bf t}'_1 + b_2 {\bf t}'_2 + \cdots + b_p {\bf t}'_p=$
\newline
$= 
{}^t (a_1,\ a_1+b_2+b_p,\ a_1+2b_1 \ | \cdots |\ 
a_i,\ a_i+b_{i+1}+b_{i-1},\ a_i+2b_i \ | \cdots $
\newline
\hspace*{2cm}
$\cdots |\ a_p,\ a_p+b_1+b_{p-1},\ a_p+2b_p)$
\newline
with $a_1, \cdots, a_p, b_1, \cdots, b_p\in {\Bbb R}$.
 If ${\bf v}=0$, 
considering the $1$st element of the $i$-th block,
we have $a_i=0$ for all i.
 Then $b_k=0$ follows from the $3k$ entry ($1\le k \le p$).
 Hence the $2p$ vectors are linearly independent. 

 We consider 
when the general solution ${\bf v}$ represents a Q-fundamental surface.
 Since ${\bf v}$ represents a normal surface,
all the elements of ${\bf v}$ are non-negative integers.
 From the $1$st element of the $i$-th block, 
$a_i$ is a non-negative integer for $1 \le i \le p$.
 Then $b_i \in {\Bbb Z}/2$
because the $3$rd element of the $i$-th block $a_i + 2b_i$ is an integer,
where ${\Bbb Z}/2$ is the set of all the integers and the half integers.

\vspace{2mm}
\noindent
{\bf The case of $b_i \ge 0$ for all $i$}: 
 We consider the case where $b_i \ge 0$ for $1 \le i \le p$.
 If $a_i$ were positive, 
then both the $1$st element of the $i$-th block $a_i$ 
and the $3$rd element of the $i$-th block $a_i + 2b_i$ would be positive, 
contradicting the square condition.
 Hence $a_i = 0$ for all $i$. 
 Then 
\newline
${\bf v}= 
b_1{\bf t}'_1 + b_2{\bf t}'_2 + \cdots + b_p{\bf t}'_p$
\newline
\hspace*{3mm}
$=
{}^t (0,\ b_2+b_p,\ 2b_1\ |\ 0,\ b_3+b_1,\ 2b_2\ | \cdots 
|\ 0,\ b_{i+1}+b_{i-1},\ 2b_i\ | \cdots |\ 0,\ b_1+b_{p-1},\ 2b_p ) \cdots (*)$
\newline
(i) We consider the case where $b_i\ge 1$ for some $i$. 
\newline
${\bf v}\ge 
{\bf t}'_i = 
{}^t (0,0,0\ | \cdots |\ 0,0,0\ |\ 0,1,0\ |\ 0,0,2\ |\ 0,1,0\ |\ 0,0,0\ | 
\cdots |\ 0,0,0)$ by $(*)$,
where the $i$-th block is $(0,0,2)$.
 When ${\bf v}$ represents a Q-fundamental surface,
${\bf v}={\bf t}'_i$.
 Thus we obtained a candidate of a Q-fundamental solution.
\newline
(ii) The case where $b_i<1$ for all i.

 We have $b_k=0$ or $1/2$ for each $k$ since $b_k \in {\Bbb Z}/2$.

 If $b_j=1/2$, then $b_{j+1}=0$. 
 Otherwise, 
$b_j=1/2$ and $b_{j+1}=1/2$ for some j, 
and both the $2$nd element of the $j$-th block $b_{j+1}+b_{j-1}$ 
and the $3$rd element of the $j$-th block $2b_j$ would be positive, 
contradicting the square condition. 

 If $b_k=1/2$ for some $k$, then $b_{k+2}=1/2$. 
 Because the $2$nd element $b_{k+2}+b_k$ of the $(k+1)$-st block is an integer. 

 Hence $p$ is even 
and
$b_{\rm odd} = 1/2$ and $b_{\rm even}=0$
or vice versa ($b_{\rm odd} = 0$ and $b_{\rm even}=1/2$).
 Thus we obtained candidates of Q-fundamental solutions 
\newline
${\bf f}'_2=(1/2){\bf t}'_1 + (1/2){\bf t}'_3 + \cdots + (1/2){\bf t}'_{p-1}=
{}^t(0,0,1\ |\ 0,1,0\ |\ 0,0,1\ |\ 0,1,0\ | \cdots |\ 0,0,1\ |\ 0,1,0)$ and
\newline
${\bf f}'_3=(1/2){\bf t}'_2 + (1/2){\bf t}'_4 + \cdots + (1/2){\bf t}'_p=
{}^t(0,1,0\ |\ 0,0,1\ |\ 0,1,0\ |\ 0,0,1\ | \cdots |\ 0,1,0\ |\ 0,0,1)$.

\vspace{3mm}
\noindent
{\bf The case where $b_i< 0$ for some $i$}: 

 First, we establish the next lemma.

\begin{lemma}\label{lemma:p1} 
If $b_i<0$ for some $i$, 
then $a_i+2b_i=0$, $a_i+b_{i+1}+b_{i-1}=0$
and $b_1=b_2=b_3=\cdots=b_p$. 
\end{lemma}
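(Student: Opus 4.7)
The plan is to exploit the square condition at the $i$-th block of ${\bf v}$ to obtain two of the three claimed equalities, and then propagate the conclusion to every index by a minimum-over-$k$ argument.

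First I would observe that since ${\bf v}$ represents a normal surface, every entry is a non-negative integer. In particular, the third element of the $i$-th block gives $a_i + 2b_i \ge 0$; combined with the hypothesis $b_i < 0$ this forces $a_i \ge -2 b_i > 0$, so the first element $a_i$ of the $i$-th block is strictly positive. By the square condition, the presence of Q-disks of type $X_{i1}$ in $\tau_i$ rules out Q-disks of the other two types, so the second and third elements of the $i$-th block must both vanish. This gives immediately
\[
a_i + 2 b_i = 0, \qquad a_i + b_{i+1} + b_{i-1} = 0,
\]
the first two identities of the lemma, and, by subtraction, the ``harmonic'' relation $b_{i+1}+b_{i-1} = 2 b_i$ at this particular index~$i$.

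To upgrade this to $b_1 = b_2 = \cdots = b_p$, I would pick an index $j$ at which the sequence $(b_k)_{k=1}^{p}$ attains its minimum. Since $b_j \le b_i < 0$, exactly the same square-condition argument applies at $j$ and yields $b_{j+1} + b_{j-1} = 2 b_j$. But $b_{j\pm 1} \ge b_j$ and the two values sum to $2 b_j$, so each must equal $b_j$; in particular both are still strictly negative. Iterating the same argument at $j \pm 1, j \pm 2, \ldots$ (reading indices modulo $p$) shows $b_k = b_j$ for every $k \in \{1, \ldots, p\}$, which is the third assertion.

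The main obstacle I anticipate is purely bookkeeping: I need to be careful that at each step of the iteration the relevant $b$-value is still strictly negative so that the square-condition argument applies, and that the cyclic indexing modulo $p$ closes up correctly. Once the minimum index $j$ is chosen, however, the propagation is automatic because every neighbour of a minimizer is again a (negative) minimizer. The other two identities of the lemma are then simply a direct reading of the expression for ${\bf v}$ at block $i$ listed in Lemma~\ref{lemma:generator}.
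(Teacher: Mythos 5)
Your proof is correct. The first half --- using $a_i+2b_i\ge 0$ and $b_i<0$ to force $a_i>0$, then invoking the square condition to kill the second and third entries of the $i$-th block --- is exactly the paper's argument, and both proofs then rest on the same ``harmonic'' relation $b_{k-1}+b_{k+1}=2b_k$, valid at every index $k$ with $b_k<0$. Where you diverge is in how you propagate this to all indices. The paper observes that $b_{i-1}-b_i=b_i-b_{i+1}$ forces the three values to be weakly monotone in one direction, then runs an induction showing the whole cyclic sequence is monotone ($0>b_i\ge b_{i+1}\ge\cdots\ge b_{i+p}=b_i$), and closes the loop to get equality. You instead apply a discrete minimum principle: at a global minimizer $j$ (which is negative since $b_j\le b_i<0$), the relation $b_{j+1}+b_{j-1}=2b_j$ together with $b_{j\pm1}\ge b_j$ forces both neighbours to be minimizers as well, and connectivity of the cycle finishes the argument. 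Your version is slightly cleaner in that it avoids the case split into the two monotonicity directions and the explicit induction bookkeeping; the paper's version has the minor advantage of not needing to introduce the minimizer. The one point you flagged as a worry --- that the value must remain strictly negative at each step of the iteration --- is indeed handled automatically, since every neighbour of a minimizer is shown to equal the (negative) minimum before the argument is applied there. No gap.
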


\begin{proof}
 Since $b_i$ is negative,
and since the $3$rd element of the $i$-th block $a_i+2b_i$ is non-negative,
$a_i\ge -2b_i>0$. 
 Then the $1$st element of the $i$-th block $a_i$ is positive,
and the square condition requires the other two elements of the block are zero,
i.e., $a_i+2b_i=0$ and $a_i+b_{i+1}+b_{i-1}=0$. 
 By subtracting the former equation from the latter,
we obtain $b_{i-1} - b_i = b_i - b_{i+1}$.
 Hence either $b_{i-1}<b_i<b_{i+1}$, 
$b_{i-1}=b_i=b_{i+1}$ 
or $b_{i-1}>b_i>b_{i+1}$. 
 Then $b_{i-1}\ge b_i \ge b_{i+1}$ or $b_{i-1}\le b_i \le b_{i+1}$ holds. 
 We consider the case of $b_{i-1}\ge b_i \ge b_{i+1}$. 
 (Similar argument will do for the case of $b_{i-1}\le b_i \le b_{i+1}$. 
 We omit it.) 
 We will prove $0>b_i\ge b_{i+1}\ge \cdots \ge b_{i+n}$ 
for any positive integer $n$ by induction. 
 Suppose that $0>b_i\ge b_{i+1}\ge \cdots \ge b_{i+k} \ \ \cdots$ (i). 
 Since $b_{i+k}$ is negative 
we have $b_{i+k-1}<b_{i+k}<b_{i+k+1}$, 
$b_{i+k-1}=b_{i+k}=b_{i+k+1}$ 
or $b_{i+k-1}>b_{i+k}>b_{i+k+1}$
by a similar argument as the beginning of this proof. 
 Then (i) implies $b_{i+k-1} \ge b_{i+k} \ge b_{i+k+1}$. 
 Hence $0>b_i\ge b_{i+1}\ge \cdots \ge b_{i+n}$ for all $n \in {\Bbb N}$. 
 Considering the case of $n=p$,
where $b_i \ge b_{i+1}\ge \cdots \ge b_{i+p}=b_i$,
we obtain $b_i = b_{i+1} = \cdots = b_{i+p-1}$.
\end{proof}

 By the above lemma,
$b_1=b_2=\cdots=b_p$ $\cdots$ (i), 
and hence $b_j<0$ for all $j$.
 Then $a_j + 2b_j=0$ follows again by Lemma \ref{lemma:p1}.
 Hence (i) implies $a_j=-2b_j=-2b_1$ for all $j$,
and $a_1 = a_2 = \cdots = a_p$. 
 Thus 
\newline
${\bf v}
=   {}^t (a_1,0,0\ |\ a_1,0,0\ | \cdots |\ a_1,0,0)
\ge {}^t (1,0,0\ |\ 1,0,0\ | \cdots |\ 1,0,0) = {\bf f}'_1$.
\newline
Since ${\bf v}$ is Q-fundamental, ${\bf v} = {\bf f}'_1$, 
which is a candidate of a Q-fundamental solution.

\vspace{3mm}
 Thus we have obtained all the candidates of Q-fundamental solutions
${\bf t}'_i, {\bf f}'_2, {\bf f}'_3, {\bf f}'_1$ 
satisfying the square condition. 
 In fact, these four vectors are all Q-fundamental solutions.


 Since size$({\bf t}'_i)=4$, size$({\bf f}'_2)=p$, 
size$({\bf f}'_3)=p$ and size$({\bf f}'_1)=p$, 
and since $p\ge 3$,
the minimal size of a non-trivial linear combination of these vectors
with non-negative integer coefficients is $3+3=6$.
 Hence ${\bf t}'_i$ is Q-fundamental.

 ${\bf f}'_1$ is Q-fundamental
because the $1$st element is non-zero
and those of the other candidates are zero.

 ${\bf f}'_2$ is Q-fundamental
since the $3$rd element of the $2$nd block is $1$
and those of the other candidates are $0$ or $2$.

 ${\bf f}'_3$ is Q-fundamental
because $(2\pi (i-1)/p)$-rotation about the axis $E_v$ 
brings ${\bf f}'_3$ to ${\bf f}'_2$.

 ${\bf t}'_i$ represents the inessential $2$-sphere surrounding the edge $e_i$
(Figure \ref{fig:inessS2forP1}). 
 ${\bf f}'_2$ and ${\bf f}'_3$ represent a non-orientable closed surface 
with maximum Euler characteristic, 
which is the connected sum of $p/2$ projective planes ($p$ is even)
(Figure \ref{fig:Lp1non-oriXmax}).
 ${\bf f}'_1$ represents a Heegaard splitting torus 
which surrounding the core circle $E_v$
(Figure \ref{fig:Lp1heegaardtorus}). 

\begin{figure}[htbp]
\begin{center}
\includegraphics[width=9cm]{s-Lp1non-oriXmax.eps}
\end{center}
\caption{}
\label{fig:Lp1non-oriXmax}
\end{figure}

\begin{figure}[htbp]
\begin{center}
\includegraphics[width=6cm]{s-Lp1heegaardtorus.eps}
\end{center}
\caption{}
\label{fig:Lp1heegaardtorus}
\end{figure}


\section{A basis of the solution space of the Q-matching equations 
for $T(p,q)$}

 In this section, 
we prove Lemma \ref{lemma:generator}.


 For $T(p,q)$, 
the senses of $X_{i1}, X_{i2}, X_{i3}$
with respect to an edge $e$
are as below.
 See Figure \ref{fig:numberQdisk}.
\newline
$\epsilon_{e,i1}=
\left\{ \begin{array}{l}
1\ {\rm if}\ e=e_{i-q} \\
-1\ {\rm if}\ e=e_{i-(q-1)} \\
-1\ {\rm if}\ e=e_i \\
1\ {\rm if}\ e=e_{i+1} \\
0\ {\rm if}\ e=E_h \\
0\ {\rm if}\ e=E_v \\
0\ {\rm otherwise} \\
\end{array} \right.\ $
$\epsilon_{e,i2}=
\left\{ \begin{array}{l}
0\ {\rm if}\ e=e_{i-q} \\
1\ {\rm if}\ e=e_{i-(q-1)} \\
1\ {\rm if}\ e=e_i \\
0\ {\rm if}\ e=e_{i+1} \\
-1\ {\rm if}\ e=E_h \\
-1\ {\rm if}\ e=E_v \\
0\ {\rm otherwise} \\
\end{array} \right.\ $
$\epsilon_{e,i3}=
\left\{ \begin{array}{l}
-1\ {\rm if}\ e=e_{i-q} \\
0\ {\rm if}\ e=e_{i-(q-1)} \\
0\ {\rm if}\ e=e_i \\
-1\ {\rm if}\ e=e_{i+1} \\
1\ {\rm if}\ e=E_h \\
1\ {\rm if}\ e=E_v \\
0\ {\rm otherwise} \\
\end{array} \right.\ $
\newline
where suffix numbers are considered modulo $p$.
 The coefficient matrix of the Q-matching equations 
is a $(p+2)\times 3p$ matrix. 
 For the $i$-th row $(1\le i \le p)$, 
the $i$-th and the $(q+(i-1))$-st blocks are $(-1,1,0)$, 
the $(i-1)$-st and the $(q+i)$-th blocks are $(1,0,-1)$ and 
the other blocks are $(0,0,0)$. 
 All the blocks of the $(p+1)$-st and the $(p+2)$-nd rows are $(0,-1,1)$. 

 Let $r_i$ be the $i$-th row.
 By adding $(r_1 + r_2 + \cdots + r_p)/2$
to the $(p+1)$-st row and to the $(p+2)$-nd row, 
we can deform these rows to zero vectors.
 Hence the rank is smaller than or equal to $p$. 
 We set 
\newline
${\bf s}_1=
{}^t (1,1,1\ |\ 0,0,0\ | \cdots |\ 0,0,0)$, 
\newline
${\bf t}_1=
{}^t (0,0,1\ |\ 0,0,0\ | \cdots |\ 0,0,0\ |\ 0,0,1\ |\ 0,1,0\ |\ 0,0,0\ |
\cdots |\ 0,1,0)$.
\newline
 The $1$st and the $q$-th blocks of ${\bf t}_1$ are $(0,0,1)$
and the $(q+1)$-st and the $p$-th blocks $(0,1,0)$.
 The other blocks of ${\bf t}_1$ are $(0,0,0)$.
 As we see below, these are solutions of the Q-matching equations.
 ${\bf s}_i$ and ${\bf t}_i$ are obtained 
from ${\bf s}_1$ and ${\bf t}_1$ respectively
by ($2\pi(i-1)/p$)-rotation about the axis $E_v$. 
 The $i$-th and the $(i+(q-1))$-st blocks of ${\bf t}_i$ are $(0,0,1)$
and the $(i-1)$-st and the $(i+q)$-th blocks $(0,1,0)$.
 ${\bf s}_i$ is a solution of the Q-matching equations
for any triangulation of any $3$-manifold. 
 However, it does not satisfy the square condition.
 ${\bf t}_i$ represents the normal surface 
which is the inessential $2$-sphere surrounding the edge $e_i$ 
and hence is a solution of the Q-matching equations. 

 We prove that ${\bf s}_1, {\bf s}_2, \cdots, {\bf s}_p$,
${\bf t}_1, {\bf t}_2, \cdots, {\bf t}_p$ are linearly independent. 
 We solve the system of linear equations below. 
\newline
${\bf 0} = 
a_1 {\bf s}_1 
+ a_2 {\bf s}_2 
+ \cdots
+ a_p{\bf s}_p 
+ b_1 {\bf t}_1 
+ b_2 {\bf t}_2 
+ \cdots 
+ b_p {\bf t}_p$
\newline
\hspace*{3mm}
$= 
{}^t (
a_1,\ a_1+b_2+b_{p-q+1},\ a_1+b_1+b_{p-q+2}\ |\ 
a_2,\ a_2+b_3+b_{p-q+2},\ a_2+b_2+b_{p-q+3}\ |
\cdots $
\newline
$\ \ \cdots |\ 
a_i,\ a_i+b_{i+1}+b_{p-q+i},\ a_i+b_i+b_{p-q+i+1}\ |
\cdots |\ 
a_p,\ a_p+b_1+b_{p-q},\ a_p+b_p+b_{p-q+1}\ )$.
\newline
 We have $a_i=0$ from the $1$st element of the $i$-th block $(1 \le i \le p)$. 
 Hence from the $2$nd and the $3$rd elements of the $i$-th block
$b_{i+1}+b_{p-q+i}=0 \ \cdots $(1) and $b_i+b_{p-q+i+1}=0 \ \cdots $(2). 
 Since this holds for all $i$ and since we consider suffix numbers modulo $p$, 
we obtain the equation (3) below. 
(Substituting $j+q$ for $i$ in (1), we have $b_{p+j}=-b_{j+q+1}$.
 Then the first equation of (3) follows. 
 For the second equation of (3), 
we replace $i$ with $j+q+1$ in (2). 
 Then $-b_{j+q+1}=b_{p+j+(q+1)-(q-1)}$ holds.)
\begin{center}
$b_j=-b_{j+(q+1)}=b_{j+(q+1)-(q-1)}=b_{j+2}\ \cdots {\rm (3)}$
\end{center}

\noindent
(i) The case of $q=2k+1$ $(k \in {\Bbb N})$. 
\par
 We get $b_i=b_{i+2}=b_{i+4}=\cdots =b_{i+2k}=b_{i+q-1}\ \cdots $(4).
 From (2), $b_i=-b_{i+q-1}$ follows. 
 This together with (4) implies $b_i=-b_i$, that is, $b_i=0$ for all $i$. 
 Hence the vectors are linearly independent.
\newline
(ii) The case of $q=2m$ $(m \in {\Bbb N})$. 
\par
$b_i=-b_{i+(q+1)}$ follows from (1). 
 On the other hand, 
$-b_{i+(q+1)}=-b_{i+2m+1}=-b_{i+2(m-1)+1}=-b_{i+2(m-2)+1}=\cdots =-b_{i+1}$ 
by (3). 
 Hence we get $b_i=-b_{i+1}\cdots $(5). 
\par
 Note that $p$ is odd since $p$ and $q$ are coprime. 
 We set $p=2n+1$ $(n \in {\Bbb N})$. 
 Since we consider indices modulo $p$, 
$b_i=b_{i+p}$ holds.
 (3) implies $b_{i+p}=b_{i+2n+1}=b_{i+2n-1}=b_{i+2n-3}=\cdots =b_{i+1}$. 
 Hence $b_i=b_{i+1}\cdots $(6). 
\par
 By (5) and (6), $b_i=0$ for all $i$. 
\newline
 Thus we have shown that the vectors 
${\bf s}_1, {\bf s}_2, \cdots, {\bf s}_p$,
${\bf t}_1, {\bf t}_2, \cdots, {\bf t}_p$ are 
linearly independent by (i) and (ii). 

 Similarly, we can see that the first $p$ row vectors
of the coefficient matrix of the Q-matching equations 
are linearly independent. 
 Then the dimension of the solution space is $3p-p=2p$. 
 Hence
${\bf s}_1, {\bf s}_2, \cdots, {\bf s}_p$,
${\bf t}_1, {\bf t}_2, \cdots, {\bf t}_p$ form 
a basis of the solution space.


\section{$(p,2)$-lens spaces}

 In this section, 
we prove Theorem \ref{theorem:main} (3),
that is, 
we determine all the Q-fundamental surfaces 
for the triangulation $T(p,2)$ of the $(p,2)$-lens space
with $p \ge 5$. 
 In Section 4, we have obtained a basis of the solution space
($\subset {\Bbb R}^{3t}$)
of the Q-matching equations for the $(p,q)$-lens space. 
 A linear combination of them gives a general solution. 
 For $T(p,2)$, it is
\newline
${\bf v} = a_1 {\bf s}_1 + b_1 {\bf t}_1 + a_2 {\bf s}_2 
+ b_2 {\bf t}_2 + \cdots + a_p{\bf s}_p + b_p {\bf t}_p$
\newline
\hspace*{3mm}
$= 
{}^t (
a_1,\ a_1+b_2+b_{p-1},\ a_1+b_1+b_p\ |\ 
a_2,\ a_2+b_3+b_p,\ a_2+b_2+b_1\ |
\cdots $
\newline
$\ \ \cdots |\ 
a_i,\ a_i+b_{i+1}+b_{i-2},\ a_i+b_i+b_{i-1}\ |
\cdots |\ 
a_p,\ a_p+b_1+b_{p-2},\ a_p+b_p+b_{p-1}\ )$.
\newline
where $a_i, b_i \in {\Bbb R}$.
 We consider when ${\bf v}$ represents a Q-fundamental surface.

\vspace{2mm}
\noindent
{\bf The case where $a_j=0$ for all $j$}: 
\newline
(i) The case where the $3$rd element of the $(i+1)$-st block 
$b_{i+1}+b_i>0\ \cdots$(1) for some $i$. 

 In this case,
\newline
${\bf v} = 
{}^t 
(0,\ b_2+b_{p-1},\ b_1+b_p \ | \cdots |\ 
0,\ b_i+b_{i-3},\ b_{i-1}+b_{i-2} \ |\ 
0,\ b_{i+1}+b_{i-2},\ b_i+b_{i-1} \ |\ $
\newline
\hspace*{1cm}
$0,\ b_{i+2}+b_{i-1},\ b_{i+1}+b_i \ |\ 
0,\ b_{i+3}+b_i,\ b_{i+2}+b_{i+1} \ | \cdots |\ 
0,\ b_1+b_{p-2},\ b_p+b_{p-1})$.

 The $2$nd element of the $(i+1)$-st block $b_{i+2}+b_{i-1}=0\ \cdots$(2)
by the square condition. 
 Adding (2) to (1), we obtain $b_i+b_{i-1}+b_{i+2}+b_{i+1}>0$. 
 Hence either $b_i+b_{i-1}>0$ or $b_{i+2}+b_{i+1}>0$.
 We consider the former case.
 Ultimately we will show that ${\bf v}={\bf t}_i$. 
 (In the latter case, similar argument shows ${\bf v}={\bf t}_{i+1}$, 
and we omit it.)
 Then $b_i+b_{i-1}>0 \ \cdots$(3), 
which is the $3$rd element of the $i$-th block.
 Hence the $2$nd element $b_{i+1}+b_{i-2}=0\ \cdots$(4) by the square conditon. 
 We get $b_{i-1}+b_{i-2}+b_{i+2}+b_{i+1}=0\ \cdots$ (5) from (2) and (4).
 The left hand side of (5) is the sum of 
the $3$rd element of the $(i-1)$-st block $b_{i-1}+b_{i-2}$
and the $3$rd element of the $(i+2)$-nd block $b_{i+2}+b_{i+1}$,
which are both non-negative. 
 Hence they are equal to $0$, i.e.,
$b_{i-1}+b_{i-2}=0\ \cdots$(6) and $b_{i+2}+b_{i+1}=0\ \cdots$(7).
 We assume, for a contradiction, 
that the $2$nd element of the $(i-1)$-st block $b_i+b_{i-3}=0$. 
 This together with (6) gives $b_{i-2}+b_{i-3}+b_i+b_{i-1}=0$,
which is the sum of the $3$rd element of the $(i-2)$-nd block
and the $3$rd element of the $i$-th block.
 Then we obtain $b_i+b_{i-1}=0$, which contradicts (3). 
 Hence we get $b_i+b_{i-3}>0\ \cdots$ (8). 
 Similar argument shows $b_{i+3} + b_i > 0 \ \cdots$ (9).
 (We assume the $2$nd element of the $(i+2)$-nd block $b_{i+3}+b_i=0$, 
to obtain $b_{i+1}+b_i+b_{i+3}+b_{i+2}=0$ by (7). 
 Considering the $3$rd element of the $(i+1)$-th block 
and the $3$rd element of the $(i+3)$-rd block, 
we can see $b_{i+1}+b_i=0$,
contradicting (1).)
\newline
 Then, by (1), (3), (8) and (9),
\newline
${\bf v} =
{}^t (0,\ b_2+b_{p-1},\ b_1+b_p\ | \cdots |\ 
0,\ b_{i-1}+b_{i-4},\ b_{i-2}+b_{i-3}\ |\ 
0,\ b_i+b_{i-3},\ 0\ |\ $
\newline
\hspace*{15mm}
$0,\ 0,\ b_i+b_{i-1}\ |0,\ 0,\ b_{i+1}+b_i\ |
0,\ b_{i+3}+b_i,\ 0 \ |\ 0,\ b_{i+4}+b_{i+1},\ b_{i+3}+b_{i+2}\ | \cdots $
\newline
\hspace*{15mm}
$\cdots |\ 0,\ b_1+b_{p-2},\ b_p+b_{p-1})$
\newline
$\ \ \ge
(0,\ 0,\ 0\ | \cdots |\ 0,\ 0,\ 0\ |\ 0,\ 1,\ 0\ |\ $
\newline
\hspace*{15mm}
$0,\ 0,\ 1\ |\ 0,\ 0,\ 1\ |\ 0,\ 1,\ 0\ |\ 0,\ 0,\ 0\ | \cdots $
\newline
\hspace*{15mm}
$\cdots |\ 0,\ 0,\ 0)={\bf t}_i$. 
\newline
 Since ${\bf v}$ is Q-fundamental, 
${\bf v}={\bf t}_i$, 
which is a candidate of a Q-fundamental solution. 
 We write this vector ${\bf t}''_i$ 
to clarify that it is for a $(p,2)$-lens space.

\vspace{3mm}
\noindent
(ii) The case where $b_i+b_{i+1}=0$ for all $i$.
\newline
 Considering this condition for $i=1,2, \cdots, p$,
we get a system of linear equations.
 Then we solve it to have $b_i=0$ for all $i$,
since $q=2$ implies $p$ is odd.
 Thus ${\bf v} = {\bf 0}$, a contradiction.

\vspace{5mm}
\noindent
{\bf The case where $a_j>0$ for some $j$}: 
\newline
 Since the $1$st element $a_j$ in the $j$-th block is positive,
by the square condition, 
the $3$rd element of the $j$-th block $a_j+b_{j-1}+b_j=0$,
and hence $b_{j-1}+b_j<0$.
 Then
\newline
${\bf v} =
{}^t (a_1,\ a_1+b_2+b_{p-1},\ a_1+b_1+b_p\ |\ 
a_2,\ a_2+b_3+b_p,\ a_2+b_2+b_1\ | \cdots$
\newline
\hspace{15mm}
$\cdots |\ 
a_{j-1},\ a_{j-1}+b_j+b_{j-3},\ a_{j-1}+b_{j-1}+b_{j-2}\ |\ 
a_j,\ 0,\ 0\ |\ $
\newline
\hspace*{15mm}
$a_{j+1},\ a_{j+1}+b_{j+2}+b_{j-1},\ a_{j+1}+b_{j+1}+b_j\ | \cdots |\ 
a_p,\ a_p+b_1+b_{p-2},\ a_p+b_p+b_{p-1})$.


\begin{lemma}\label{lemma:p-2-1}
If $b_{i-1}+b_i<0$ for some $i$, 
then $a_i=-(b_{i-1}+b_i)>0$ and either
$b_{i-2}+b_{i-1}=b_{i-1}+b_i=b_i+b_{i+1}$, 
$b_{i-2}+b_{i-1}<b_{i-1}+b_i<b_i+b_{i+1}$ 
or $b_{i-2}+b_{i-1}>b_{i-1}+b_i>b_i+b_{i+1}$. 
\end{lemma}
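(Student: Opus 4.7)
The plan is to exploit the square condition at the $i$-th tetrahedron, which is forced by the hypothesis $b_{i-1}+b_i<0$. Recall that the $i$-th block of the general solution ${\bf v}$ is $(a_i,\ a_i+b_{i+1}+b_{i-2},\ a_i+b_i+b_{i-1})$, and since ${\bf v}$ represents a normal surface, every entry must be non-negative. From $a_i+b_i+b_{i-1}\ge 0$ and $b_{i-1}+b_i<0$ we immediately obtain $a_i>0$.

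Next I would apply the square condition at $\tau_i$: since the first entry $a_i$ is strictly positive, the other two entries of the $i$-th block must vanish. This gives the two identities
\begin{equation*}
a_i+b_i+b_{i-1}=0,\qquad a_i+b_{i+1}+b_{i-2}=0.
\end{equation*}
The first of these yields $a_i=-(b_{i-1}+b_i)$, which is the first claim of the lemma.

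Subtracting the two identities gives $b_{i+1}+b_{i-2}=b_i+b_{i-1}$, which I would rewrite as
\begin{equation*}
(b_i+b_{i+1})-(b_{i-1}+b_i)=(b_{i-1}+b_i)-(b_{i-2}+b_{i-1}).
\end{equation*}
In other words, the three sums $b_{i-2}+b_{i-1}$, $b_{i-1}+b_i$ and $b_i+b_{i+1}$ form an arithmetic progression. The common difference is either zero, positive, or negative, which immediately yields the three alternatives in the statement.

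There is no real obstacle here: once one observes that the square condition applied to $\tau_i$ forces two linear relations, the arithmetic-progression rearrangement is purely mechanical. The only thing to be careful about is the direction of the inequality signs in the trichotomy, which is pinned down by the sign of the common difference $(b_{i-1}-b_{i-2})$ (equivalently $(b_{i+1}-b_i)$). The lemma will presumably be chained in the sequel to propagate the inequality $b_{j-1}+b_j<0$ along the indices, in the same spirit as Lemma~\ref{lemma:p1} for the $(p,1)$ case.
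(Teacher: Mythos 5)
Your proposal is correct and follows essentially the same route as the paper: non-negativity of the third entry forces $a_i>0$, the square condition then kills the second and third entries of the $i$-th block, and the two resulting identities yield the trichotomy (the paper adds them to see that $-a_i=b_{i-1}+b_i$ is the average of $b_{i-2}+b_{i-1}$ and $b_i+b_{i+1}$, while you subtract them to get the equivalent arithmetic-progression statement).
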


\begin{proof}
 Since $b_{i-1}+b_i$ is negative, 
and since the $3$rd element of the $i$-th block $a_i+b_{i-1}+b_i\ge 0$, 
we have $a_i>a_i+b_{i-1}+b_i\ge 0$. 
 In the $i$-th block,
since the $1$st element $a_i>0$, 
the $2$nd and the $3$rd elements are zero by the square condition. 
 Summing up these elements
$a_i+b_{i-1}+b_i=0$ and $a_i+b_{i-2}+b_{i+1}=0$,
we get $2a_i+b_{i-2}+b_{i-1}+b_i+b_{i+1}=0$. 
 Hence $-2a_i=b_{i-2}+b_{i-1}+b_i+b_{i+1}$, 
and we have either $b_{i-2}+b_{i-1}=-a_i=b_i+b_{i+1}$, 
$b_{i-2}+b_{i-1}<-a_i<b_i+b_{i+1}$ 
or $b_{i-2}+b_{i-1}>-a_i>b_i+b_{i+1}$. 

In addition, we obtain $-a_i=b_{i-1}+b_i<0$ from the $3$rd element. 
\end{proof}

\begin{lemma}\label{lemma:p-2-2}
If $b_{l-1}+b_l<0$ for some $l$, 
then $-a_1=-a_2=\cdots =-a_p=b_1+b_2=b_2+b_3=\cdots =b_p+b_1$. 
\end{lemma}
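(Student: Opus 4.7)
The plan is to repackage Lemma~\ref{lemma:p-2-1} as a statement about the quantities $c_i := b_{i-1}+b_i$ (indices mod $p$) and then exploit periodicity. First I would note that when $a_i>0$, the square condition forces the $2$nd and $3$rd elements of the $i$-th block to vanish, i.e.\ $a_i+b_{i-1}+b_i=0$ and $a_i+b_{i+1}+b_{i-2}=0$; subtracting these yields $b_{i+1}-b_{i-1}=b_i-b_{i-2}$, which is exactly $c_{i+1}-c_i=c_i-c_{i-1}$. Equivalently, $c_{i-1}+c_{i+1}=2c_i$, so whenever $c_i<0$ (equivalently $a_i>0$) the triple $(c_{i-1},c_i,c_{i+1})$ is an arithmetic progression. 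The conclusion of the lemma then reduces to showing that all $c_i$ are equal to a common negative value.

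Next I would fix $l$ with $c_l<0$ (guaranteed by hypothesis) and set $d:=c_{l+1}-c_l$, aiming to show $d=0$. Assume $d>0$ for contradiction. Then $c_{l-1}=c_l-d<c_l<0$, so Lemma~\ref{lemma:p-2-1} applies at index $l-1$ and gives $c_{l-2}=2c_{l-1}-c_l=c_l-2d$, which is again strictly negative. By induction $c_{l-k}=c_l-kd<0$ for every $k\ge 0$, so the hypothesis of Lemma~\ref{lemma:p-2-1} is preserved at each step. But periodicity forces $c_{l-p}=c_l$, yielding $pd=0$, a contradiction. The case $d<0$ is identical with ``backward'' replaced by ``forward'', iterating $c_{l+k}=c_l+kd$ starting from $c_{l+1}=c_l+d<0$.

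Having established $d=0$, I conclude $c_{l-1}=c_l=c_{l+1}<0$. Applying the same reasoning at index $l+1$ (whose common difference $c_{l+2}-c_{l+1}$ must also vanish) propagates to $c_{l+2}=c_{l+1}$, and iterating around the full cycle yields $c_1=c_2=\cdots=c_p=c_l$. Lemma~\ref{lemma:p-2-1} at each index then gives $a_i=-c_i=-c_l$ for every $i$, which is precisely the chain $-a_1=\cdots=-a_p=b_1+b_2=\cdots=b_p+b_1$ asserted by the lemma.

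The main point to be careful about is the induction inside the contradiction step: one must check that the hypothesis $c_{l-k}<0$ of Lemma~\ref{lemma:p-2-1} continues to hold at every stage. This is automatic since $c_{l-k}=c_l-kd$ has a fixed negative base and a consistent-sign drift, but it is the one spot where the argument could slip. Conceptually, the proof is saying that Lemma~\ref{lemma:p-2-1} makes $(c_i)$ locally affine on its negative locus, and a locally affine periodic sequence must be constant.
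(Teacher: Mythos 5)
Your proof is correct and follows essentially the same route as the paper: both arguments iterate the relation $c_{i-1}+c_{i+1}=2c_i$ (valid wherever $c_i=b_{i-1}+b_i<0$) around the cycle, check that negativity persists so the relation keeps applying, and then use periodicity modulo $p$ to force all the $c_i$ to coincide, after which Lemma~\ref{lemma:p-2-1} yields $a_i=-c_i$. The only difference is cosmetic: the paper runs an induction on a monotone chain of inequalities $0>c_l\ge c_{l-1}\ge\cdots$, while you make the arithmetic progression explicit via the common difference $d$ and conclude from $pd=0$.
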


\begin{proof}
 Setting $i=l$ in Lemma \ref{lemma:p-2-1}, 
we have either $b_{l-2}+b_{l-1}=b_{l-1}+b_l=b_l+b_{l+1}$, 
$b_{l-2}+b_{l-1}<b_{l-1}+b_l<b_l+b_{l+1}$ 
or $b_{l-2}+b_{l-1}>b_{l-1}+b_l>b_l+b_{l+1}$.
 Then $b_{l-2}+b_{l-1}\le b_{l-1}+b_l\le b_l+b_{l+1}$ 
or $b_{l-2}+b_{l-1}\ge b_{l-1}+b_l\ge b_l+b_{l+1}$ holds. 
 We consider the case of $b_{l-2}+b_{l-1}\le b_{l-1}+b_l\le b_l+b_{l+1}$. 
 (Similar argument will do for the other case, and we omit it.)
 We will prove 
$0>b_{l-1}+b_l\ge b_{l-2}+b_{l-1}\ge \cdots \ge b_{l-1-n}+b_{l-n}$ 
for any positive integer $n$
by induction. 
 Suppose
$0>b_{l-1}+b_l\ge b_{l-2}+b_{l-1}\ge \cdots \ge b_{l-1-k}+b_{l-k}$. 
 Setting $i=l-k$ in Lemma \ref{lemma:p-2-1}, 
we get either $b_{l-2-k}+b_{l-1-k}=b_{l-1-k}+b_{l-k}=b_{l-k}+b_{l-k+1}$, 
$b_{l-2-k}+b_{l-1-k}>b_{l-1-k}+b_{l-k}>b_{l-k}+b_{l-k+1}$ 
or $b_{l-2-k}+b_{l-1-k}<b_{l-1-k}+b_{l-k}<b_{l-k}+b_{l-k+1}$. 
 The second one
contradicts the assumption of induction. 
 Then we have $0 > b_{l-1-k}+b_{l-k} \ge b_{l-2-k}+b_{l-1-k}$. 
 Hence $0>b_{l-1}+b_l\ge \cdots \ge b_{l-1-n}+b_{l-n}$ 
for all $n \in {\Bbb N}$ by induction. 
 We set $n=p$, 
to obtain $0>b_{l-1}+b_l\ge \cdots \ge b_{l-1-p}+b_{l-p}=b_{l-1}+b_l$, 
where we consider indices modulo $p$.
 Hence $0>b_{l-1}+b_l=\cdots =b_{l-1-p}+b_{l-p}$ follows.

 By Lemma \ref{lemma:p-2-1}, 
$b_{l-1-m}+b_{l-m}=-a_{l-m}$ holds for $0\le m \le p$. 
\end{proof}

\vspace{5mm}

 By Lemma \ref{lemma:p-2-2},
$a_i = -(b_{i-1}+b_i) > 0$ for $\forall i \in \{ 1,2,\cdots, p \}$.
 Hence, 
the $2$nd and the $3$rd elements are zero in each block by the square condition,
and
\newline
${\bf v} =
{}^t (a_1,\ 0,\ 0,\ | \cdots |\ a_j,\ 0,\ 0\ | \cdots |\ a_p,\ 0,\ 0) \ge
{}^t (1,\ 0,\ ,0\ | \cdots |\ 1,\ 0,\ 0\ | \cdots |\ 1,\ 0,\ 0)$
\newline
 Let ${\bf f}''_1$ denote the last vector. 
 Since ${\bf v}$ is Q-fundamental, ${\bf v}={\bf f}''_1$,
which is a candidate of a Q-fundamental solution. 

 Thus we have obtained all the candidates of Q-fundamental solutions 
${\bf t}''_1, \cdots, {\bf t}''_p$ and ${\bf f}''_1$ 
satisfying the square condition. 
 In fact, ${\bf t}''_i$ and ${\bf f}''_1$ are Q-fundamental solutions. 
 ${\bf f}''_1$ is Q-fundamental
because the $1$st elements of the blocks of ${\bf f}''_1$ are $1$
and those of the other vectors ${\bf t}''_1, \cdots, {\bf t}''_p$ are $0$.
 Since size$({\bf t}''_i)=4$, size$({\bf f}''_1)=p$ and $p\ge 5$, 
${\bf t}''_i$ is a Q-fundamental solution. 

\begin{figure}[htbp]
\begin{center}
\includegraphics[width=8cm]{s-Lp22-spheresurroundinge.eps}
\end{center}
\caption{}
\label{fig:Lp22-spheresurroundinge}
\end{figure}

 ${\bf t}''_i$ represents the inessential $2$-sphere
surrounding the edge $e_i$.
 See Figure \ref{fig:Lp22-spheresurroundinge}.
 ${\bf f}''_1$ represents a Heegaard splitting torus 
which surrounds the core circle $E_v$.


\section{Q-fundamental surfaces in the $(p,q)$-lens space}

 In this section,
we prove Lemma \ref{lemma:p-qZ} and
Theorems \ref{theorem:p-qfund}, \ref{theorem:p-qfund-podd1/2}.
 In general $(p,q)$-lens spaces,
we consider Q-fundamental surfaces
with no quadrilateral normal disks 
disjoint from the core circles $E_v$ and $E_h$.
 In another words,
we consider 
when 
${\bf v}= a_1{\bf s}_1 + \cdots + a_p{\bf s}_p 
+ b_1{\bf t}_1 + \cdots + b_p{\bf t}_p$
with $a_1 = \cdots = a_p = 0$
represents a Q-fundamental surface.
 We will obtain a restriction on $b_i$'s.

\begin{proof}
 We prove Lemma \ref{lemma:p-qZ}.

 The $j$-th block of ${\bf v}$ is 
${}^t(a_j,\ a_j + b_{j+1}+b_{p-q+j},\ a_j + b_j+b_{p-q+j+1})$
for $1 \le j \le p$.

 Since ${\bf v} \in {\Bbb Z}^{3p}$,
we can see from the $1$st element of each block
that $a_k \in {\Bbb Z}$ for all $k \cdots$ (1).

 Let $i$ be an arbitrary integer with $1\le i\le p$. 
 From the $3$rd element of the $i$-th block,
we can see $a_i+b_i+b_{p-q+i+1}\in {\Bbb Z}$. 
 Then (1) implies $b_i+b_{p-q+i+1}\in {\Bbb Z}\cdots (2)$. 
 On the other hand, 
we have $b_{i+2}+b_{p-q+i+1}\in {\Bbb Z} \cdots$ (3)
from the $2$nd element of the $(i+1)$-st block. 
 By subtract (3) from (2), 
we obtain 
${\Bbb Z}\ni (b_i+b_{p-q+i+1})-(b_{i+2}+b_{p-q+i+1})=b_i-b_{i+2}\cdots (4)$
for all $i$. 

 When $p$ is odd, 
$b_i-b_{i+2},\ b_{i+2}-b_{i+4},\ \cdots,\ b_{i+2(p-2)}-b_{i+2(p-1)} 
\in {\Bbb Z}$ 
by (4), 
and hence ${\mathcal B} \subset {\Bbb Z}+b_1$, 
that is, 
the decimal places of all the $b_i$'s coincide. 
 In particular $b_i-b_{p-q+i+1}\in {\Bbb Z}$.
 This together with (2) implies 
${\Bbb Z} \owns (b_i+b_{p-q+i+1})+(b_i-b_{p-q+i+1})=2b_i\cdots (5)$. 
 Hence $b_i\in {\Bbb Z}$ or $b_i\in {\Bbb Z}+1/2$.

 When $p$ is even, 
${\mathcal B}_0 \in {\Bbb Z}+b_0$ and ${\mathcal B}_1 \in {\Bbb Z}+b_1$ by (4). 
 Since $q$ is odd, $p-q+1$ is even, 
and hence $b_i-b_{p-q+i+1}\in {\Bbb Z}$ for all $i$.
 Then we obtain (5) again, 
and, for $i=0$ and $1$, 
${\mathcal B}_i \subset {\Bbb Z}$ or ${\mathcal B}_i \subset {\Bbb Z}+1/2$.
\end{proof}

 In the rest of this section 
we consider the case where $a_k = 0$ for all $k$. 
 Then ${\bf v}= b_1 {\bf t}_1 + \cdots + b_p {\bf t}_p$, 
$(b_1, \cdots b_p\in {\Bbb R})$.
 The $i$-th block of ${\bf v}$ is 
${}^t(0,\ b_{i+1}+b_{p-q+i},\ b_j+b_{p-q+i+1})$
for $1\le i \le p$.
 Let $\tilde{\bf v}$ be the vector obtained from ${\bf v}$
by deleting the $1$st elements of all the blocks.
 The $1$st and the $2$nd elements of the $i$-th block of $\tilde{\bf v}$ are
equal to the $2$nd and the $3$rd elements of the $i$-th block of ${\bf v}$.

\begin{proof} 
 We prove Theorem \ref{theorem:p-qfund}.


(I)
 Since ${\bf v}$ is Q-fundamental, 
$\tilde{\bf v}$ has at least one positive element $b_j+b_l$.
 Either both $j$ and $l$ are odd, or both even because $p$ is even.
 We suppose they are even, i.e., $j,l \in 2{\Bbb Z}\cdots$ ($*$). 
 (Similar argument will do for the case of $j,l \in 2{\Bbb Z}+1$. We omit it.)
 We set 
${\bf v}'= 
0 {\bf t}_1 + b_2 {\bf t}_2 + \cdots + 0 {\bf t}_{p-1} + b_p {\bf t}_p$,
which is obtained from
${\bf v}= b_1 {\bf t}_1 + \cdots + b_p {\bf t}_p$
by replacing all the $b_i$'s with $i$ odd by $0$.
Then ${\bf v}\ge {\bf v}'$ and ${\bf v}'> {\bf 0}$ by ($*$). 
Since ${\bf v}$ is Q-fundamental, ${\bf v}={\bf v}'$. 
Hence ${\mathcal B}_1 \subset \{ 0\}$. 
${\mathcal B}_0 \subset {\Bbb Z}$ or 
${\mathcal B}_0 \subset {\Bbb Z}+1/2$ by Lemma \ref{lemma:p-qZ}.
 This completes the proof of Theorem \ref{theorem:p-qfund} (I). 

(II)
\noindent
{\bf The case where $p$ is odd}. 
 Set $b'_i=-1/2,0$ or $1/2$ according as $b_i$ is negative, $0$ or positive. 
 Let ${\bf v}''$ be a solution obtained from ${\bf v}$ 
by replacing each $b_i$ with $b'_i$,
that is, ${\bf v}''= b'_1 {\bf t}_1 + \cdots + b'_p {\bf t}_p$.
 For each element $b_i+b_j$ of $\tilde{\bf v}$, 
we will prove $b_i+b_j\ge b'_i+b'_j\ge 0$. 

 In case of $b_i, b_j >0$,
the condition $b_i,b_j\in {\Bbb Z}+1/2$ 
implies $b_i, b_j \ge 1/2$.
 Hence $b_i+b_j\ge 1/2+1/2=b'_i+b'_j$
and $b'_i+b'_j=1/2+1/2=1>0$.

 We consider the case of $b_i<0$ and $b_j>0$ (similar for $b_i>0$ and $b_j<0)$. 
 Since $b_i + b_j$ is the element of ${\bf v}$, 
we have $b_i+b_j\ge 0$. 
 Hence $b_i+b_j\ge 0=-1/2+1/2=b'_i+b'_j$
and $b'_i+b'_j=-1/2+1/2=0 \ge 0$. 

 The condition $b_i<0$ and $b_j<0$ contradicts 
that every element of ${\bf v}$ is non-negative. 

 Now we prove 
that there is an element $b_i+b_j$ of $\tilde{\bf v}$
with $b_i>$ and $b_j > 0$.
 Since $b_i, b_j \in {\Bbb Z} + 1/2$,
each of them is positive or negative.
 Suppose, for a contradiction,
that for every element $b_i + b_j$ of $\tilde{\bf v}$
the signs of $b_i$ and $b_j$ are opposite. 
 Then $sgn(b_i\times b_{i-q+1})=-1\cdots (1)$ 
and $sgn(b_{i+2}\times b_{i-q+1})=-1$ 
from the $3$rd element of the $i$-th block 
and the $2$nd element of the $(i+1)$-st block respectively. 
 Then we obtain 
$sgn(b_i\times b_{i-q+1})\times sgn(b_{i+2}\times b_{i-q+1})=1$, 
that is, $sgn(b_i\times b_{i+2})=1\cdots (2)$ for all $i$. 
 Since $p$ is odd, 
by applying (2) repeatedly
sgn\,$(b_1)=$\,sgn\,$(b_3)= \cdots =$\,sgn\,$(b_p)=$\,sgn\,$(b_2)=$\,
sgn\,$(b_4)= \cdots =$\,sgn\,$(b_{p-1})$.
 Thus the signs of all the elements of ${\mathcal B}$ coincide.
 In particular, $sgn(b_i\times b_{i-q+1})=1$,
which contradicts $(1)$.
 Hence there is an element $b_i+b_j$ of $\tilde{\bf v}$
with $b_i, b_j>0$.
 Then ${\bf v}''$ has an element $b'_i+b'_j > 0$
since $sgn(b'_i)=sgn(b_i)$ and $sgn(b'_j)=sgn(b_j)$ by definition.

 Thus ${\bf v} \ge {\bf v}''>{\bf 0}$. 
 Since ${\bf v}''$ is a solution and ${\bf v}$ is Q-fundamental, 
${\bf v}={\bf v}''$.

\noindent
{\bf The case where $p$ is even}. 
 We can assume without loss of generality
that ${\mathcal B}_0 = \{ 0 \}$ and ${\mathcal B}_1 \subset {\Bbb Z} + 1/2$.
 For each element $b_i + b_j$ of $\tilde{\bf v}$, 
$i$ and $j$ are both even or both odd.

 Similar arguments as in the previous case will do
except the followings. 

 In case of $b_i=0$ and $b_j=0$ (with $i, j$ even), 
we obtain $b_i+b_j=0+0=b'_i+b'_j$. 
 Hence $b_i+b_j\ge b'_i+b'_j\ge 0$ holds. 

 We prove 
that $\tilde{\bf v}$ has an element $b_k+b_l$ with $b_k, b_l>0$.
 For all the elements $b_k+b_l$ with $k,l$ odd, 
suppose that $b_k+b_l$ are of the opposite signs. 
 For $\forall i \in {\Bbb Z}$
we have $(1)$ and $(2)$ again. 
 Since $q$ is odd,
$-q+1$ is even,
and by applying (2) repeatedly
we obtain $sgn(b_i\times b_{i-q+1})=1$
for all odd integer $i$ with $1 \le i \le p$.
 This contradicts (1).
 Thus the proof of Theorem \ref{theorem:p-qfund} (II) is completed.

\vspace{2mm}

(III)
 Set $b^*_i=-1,0$ or $1$ according as $b_i$ is negative, $0$ or positive. 
 Let ${\bf v}^*$ be the vector 
obtained from ${\bf v}$
by replacing each $b_i$ with $b^*_i$. 

\noindent
{\bf The case where $p$ is odd.} 
 For all the elements $b_i+b_j$ of $\tilde{\bf v}$, 
we will prove $b_i+b_j\ge b^*_i+b^*_j\ge 0$ for all $i,j$. 

 If $b_i>0$ and $b_j>0$, 
then $b_i\ge 1$ and $b_j\ge 1$ because $b_i,b_j\in {\Bbb Z}$. 
 Hence $b_i+b_j\ge 1+1=b^*_i+b^*_j>0$. 

 We consider the case of $b_i<0$ and $b_j>0$.
 (The same argument will do for the case of $b_i>0$ and $b_j<0$.)
 Since $b_i+b_j$ is an element of ${\bf v}$, it is non-negative.
 Hence we have $b_i+b_j \ge 0=-1+1=b^*_i+b^*_j\ge 0$. 

 When $b_i=0$ and $b_j>0$ 
(the proof is the same for the case of $b_i>0$ and $b_j=0$), 
$b_j\in {\Bbb Z}$ implies $b_j\ge 1$,
and hence $b_i+b_j\ge 0+1=b^*_i+b^*_j>0$. 

 In case of $b_i=0$ and $b_j=0$,
clearly $b_i+b_j\ge b^*_i+b^*_j\ge 0$
because $b_i+b_j=0+0=b^*_i+b^*_j$.


 We prove that $\tilde{\bf v}$ has 
an element $b_i+b_j$ 
with ($b_i\ge 0$ and $b_j> 0$) or ($b_i>0$ and $b_j\ge 0$). 
 Suppose not.
 Then, 
for each element $b_i+ b_j$ of $\tilde{\bf v}$, 
either (1) $b_i=b_j=0$ or (2) $b_i$ and $b_j$ are of opposite signs.

 Suppose $b_k = 0$ for some $k$. 
 Then (1) holds rather than (2) 
for the $2$nd element $b_k+b_{k-q+1}$ of the $k$-th block of $\tilde{\bf v}$,
that is, $b_{k-q+1}=0$.
 This implies that 
the $1$st element $b_{k+2}+b_{k-q+1}$ of the $(k+1)$-st block of $\tilde{\bf v}$
satisfies (1). 
 Then we have $b_{k+2}=0$.
 Hence $b_k=0$ implies $b_{k+2}=0$ for all $k \cdots (3)$. 
 Since $p$ is odd, $b_k=0$ for all $k$. 
 Then $\tilde{\bf v} = {\bf 0}$,
and hence ${\bf v}={\bf 0}$, contradicting that it is Q-fundamental. 
 Hence $b_k\not=0$ for all $k$. 

 Thus we can assume that, for all the elements $b_i+b_j$ of $\tilde{\bf v}$,
(2) holds, i.e., $b_i$ and $b_j$ are of opposite signs. 
 Then we obtain a contradiction 
by similar arguments as in the former half of (II).
 Hence $\tilde{\bf v}$ has 
an element $b_i+b_j$ 
with ($b_i\ge 0$ and $b_j> 0$) or ($b_i>0$ and $b_j\ge 0$). 

 Thus ${\bf v}\ge {\bf v}^* >0$. 
 Since ${\bf v}$ is a Q-fundamental, ${\bf v}={\bf v}^*$. 

\noindent
{\bf The case where $p$ is even.}
 The proof is similar to the previous case,
and to the latter half of (II).
 We omit it.
 This completes the proof of Theorem \ref{theorem:p-qfund} (III).
\end{proof}

\vspace{3mm}

\noindent
\begin{proof}
 We prove Theorem \ref{theorem:p-qfund-podd1/2}.
 Suppose, for a contradiction, that ${\bf v}$ is Q-fundamental. 
 Then $b_l=1/2$ or $-1/2$ for all $l$ by (II) in Theorem \ref{theorem:p-qfund}. 
 Note that an element $b_i+b_j$ with $b_i=-1/2$ and $b_j=-1/2$ cannot exist.
 In the expression
\newline
$\tilde{\bf v}
=
{}^t 
(b_2+b_{p-q+1},\ b_1+b_{p-q+2}\ |\ b_3+b_{p-q+2},\ b_2+b_{p-q+3}\ | \cdots$
\newline
\hspace*{15mm}
$\cdots |\ b_{i+1}+b_{p-q+i},\ b_i+b_{p-q+i+1}\ | \cdots |\ 
b_1+b_{p-q},\ b_p+b_{p-q+1})$,
\newline
each $b_i$ appears four times. 
 An element $b_i+b_j$ is equal to $0$
if and only if precisely one of $b_i$ and $b_j$ is equal to $-1/2$.
 Thus
the number of elements equal to $0$ is 
(the number of $b_i$'s equal to $-1/2) \times 4$. 
 In particular it is even $\cdots (*)$.

 The square condition implies 
that each block of $\tilde{\bf v}$ has one or two elements equal to $0$.
 Let $m$ be the number of blocks with their two elements equal to $0$.

 We will prove that 
the blocks with their two elements equal to $0$
separate into pairs.
 Suppose that the block for $\tau_i$ has the two elements equal to $0$.
 There are four patterns below.
\begin{enumerate}
\item[(i)] When $b_i=-1/2$ and $b_{i+1}=-1/2$,
 the pair of blocks for $\tau_i$ and $\tau_{i+q}$ are ${\bf 0}$. 
\item[(ii)] When $b_{i-q}=-1/2$ and $b_i=-1/2$, 
 the pair of blocks for $\tau_i$ and $\tau_{i-1}$ are ${\bf 0}$. 
\item[(iii)] When $b_{i-q+1}=-1/2$ and $b_{i+1}=-1/2$, 
 the pair of blocks for $\tau_i$ and $\tau_{i+1}$ are ${\bf 0}$. 
\item[(iv)] When $b_{i-q}=-1/2$ and $b_{i-q+1}=-1/2$, 
 the pair of blocks for $\tau_i$ and $\tau_{i-q}$ are ${\bf 0}$. 
\end{enumerate}
 Note that the block for $\tau_i$ is
$\left( \begin{array}{c}
b_{i+1}   + b_{i-q} \\
b_i + b_{i-q+1} \\
\end{array} \right)$, 
while the blocks for $\tau_{i-1}$, 
$\tau_{i+1}$, $\tau_{i+q}$ and $\tau_{i-q}$ 
are
$\left( \begin{array}{c}
b_i + b_{i-q-1} \\
b_{i-1}   + b_{i-q} \\
\end{array} \right)$, 
$\left( \begin{array}{c}
b_{i+2} + b_{i-q+1} \\
b_{i+1} + b_{i-q+2} \\
\end{array} \right)$, 
$\left( \begin{array}{c}
b_{i+q+1} + b_i \\
b_{i+q}   + b_{i+1} \\
\end{array} \right)$
and
$\left( \begin{array}{c}
b_{i-q+1}  + b_{i-2q} \\
b_{i-q} + b_{i-2q+1} \\
\end{array} \right)$
respectively.
 Precisely one block has the same $b_j$'s equal to $-1/2$ as that for $\tau_i$.

 Hence $m=2k$ for some non-negative integer $k$,
and the number of elements equal to $0$ of $\tilde{\bf v}$ 
is $2m+1\times(p-m)=2\times 2k+1\times (p-2k)=p+2k$. 
 Since $p$ is odd, $p+2k$ is odd. 
 This contradicts $(*)$. 
\end{proof}


\section{Examples of fundamental surfaces}

 In this section, 
several examples of fundamental surfaces in lens spaces are given.
 The authors have comfirmed 
that they are actually Q-fundamental by computer
except for the last one in the $(418,153)$-lens space.
 By Lemma \ref{lemma:HakenFund},
they are fundamental with respect to Haken's matching equations
except the inessential torus in the $(18,7)$-lens space.
 We begin this section with showing the lemma.

 Recall that
a closed surface $F$ in a lens space
intersects each of $E_v$ and $E_h$ 
in odd number of points
if and only if $F$ is non-orientable.


\begin{proof}
 We prove Lemma \ref{lemma:HakenFund}.

 Suppose, for a contradiction,
that $F$ is decomposed as $F = F_1 + F_2$.
 Since $F$ intersects $E_v$ and $E_h$ in a single point,
one of $F_1$ or $F_2$, say $F_1$ intersects $E_v$ and $E_h$ in a single point,
and $F_2$ is disjoint from $E_v \cup E_h$.
 Then $F_2$ is ${}^t(1,0,0\ |\ 1,0,0\ | \cdots |\ 1,0,0)$,
the Heegaard splitting torus surrounding $E_v$ and $E_h$,
which is the only normal surface disjoint from $E_v \cup E_h$.
 Note that $F_2$ has a normal disks of type $X_{k1}$ 
in each tetrahedron $\tau_k$.
 By assumption, $F$ has a normal disk of type $X_{k2}$ or $X_{k3}$
which cannot exist together with a normal disk of $X_{k1}$.
 This is a contradiction.
\end{proof}

 We consider the $(p,q)$-lens space
with $p$ even and $q \ge 3$.
 Then
${\bf h} = (\sum_{k=1}^{p/2} {\bf t}_{2k-1})/2 = 
{}^t (0 \ 0 \ 1 | 0 \ 1 \ 0 | 0 \ 0 \ 1 | 0 \ 1 \ 0 |
\cdots | 0 \ 0 \ 1 | 0 \ 1 \ 0 )$ is fundamental and not Q-fundamental.
 In fact, it is geometrically compressible. 
 (For the definition, see section 2.)

\begin{figure}[htbp]
\begin{center}
\includegraphics[width=5cm]{s-ComprDisk8-3.eps}
\end{center}
\caption{}
\label{fig:ComprDisk8-3}
\end{figure}

 For example, 
in the $(8,3)$-lens space,
the normal surface 
\newline
${\bf h} = {}^t (0 \ 0 \ 1 | 0 \ 1 \ 0 | 0 \ 0 \ 1 | 0 \ 1 \ 0 | 
0 \ 0 \ 1 | 0 \ 1 \ 0 | 0 \ 0 \ 1 | 0 \ 1 \ 0 )$
is not Q-fundamental. 
 It has a compressing disk as shown in Figure \ref{fig:ComprDisk8-3},
and compressing yields the Q-fundamental surface
${\bf h} - {\bf t}_1 = 
{}^t (0 \ 0 \ 0 | 0 \ 1 \ 0 | 0 \ 0 \ 0 | 0 \ 0 \ 0 | 
0 \ 0 \ 1 | 0 \ 1 \ 0 | 0 \ 0 \ 1 | 0 \ 0 \ 0)$
which represents
a non-orientable closed surface with maximal Euler characteristic,
a Klein bottle in this case.
 See Figure \ref{fig:83lens}.

\begin{figure}[htbp]
\begin{center}
\includegraphics[width=5cm]{s-83lens.eps}
\end{center}
\caption{}
\label{fig:83lens}
\end{figure}

 In case of $(16,3)$-lens space,
there are two ways of performing compressing operations twice.
 Both
\newline
${\bf h} - {\bf t}_1 - {\bf t}_9$
\newline
$ = 
{}^t (0 \ 0 \ 0 | 0 \ 1 \ 0 | 0 \ 0 \ 0 | 0 \ 0 \ 0 | 
0 \ 0 \ 1 | 0 \ 1 \ 0 | 0 \ 0 \ 1 | 0 \ 0 \ 0 | 
0 \ 0 \ 0 | 0 \ 1 \ 0 | 0 \ 0 \ 0 | 0 \ 0 \ 0 | 
0 \ 0 \ 1 | 0 \ 1 \ 0 | 0 \ 0 \ 1 | 0 \ 0 \ 0)$
and 
\newline
${\bf h} - {\bf t}_1 - {\bf t}_7$
\newline
$ = 
{}^t (0 \ 0 \ 0 | 0 \ 1 \ 0 | 0 \ 0 \ 0 | 0 \ 0 \ 0 | 
0 \ 0 \ 1 | 0 \ 0 \ 0 | 0 \ 0 \ 0 | 0 \ 1 \ 0 | 
0 \ 0 \ 0 | 0 \ 0 \ 0 | 0 \ 0 \ 1 | 0 \ 1 \ 0 | 
0 \ 0 \ 1 | 0 \ 1 \ 0 | 0 \ 0 \ 1 | 0 \ 0 \ 0)$
\newline
are Q-fundamental surfaces
which represent 
non-orientable closed surfaces with maximal Euler characteristics, 
 In the $(16,3)$-lens space,
such a surface is the connected sum of four projective planes
as shown in \cite{BW}.

 There is a Q-fundamental surface with some $a_k$ non-zero.
 For example, $(18,7)$-lens space has such one with the coordinate
\newline
${}^t (0 \ 0 \ 1 | 0 \ 0 \ 0 | 0 \ 0 \ 0 | 0 \ 1 \ 0 |
0 \ 0 \ 1 | 0 \ 0 \ 0 | 0 \ 1 \ 0 | 0 \ 0 \ 0 |
1 \ 0 \ 0 | 1 \ 0 \ 0 | 0 \ 0 \ 0 | 0 \ 1 \ 0 |$
\newline
\hspace*{3cm}
$0 \ 0 \ 0 | 0 \ 0 \ 1 | 0 \ 1 \ 0 | 0 \ 0 \ 0 |
0 \ 0 \ 0 | 0 \ 0 \ 1)$,
\newline
which represents a compressible torus.

\begin{figure}[htbp]
\begin{center}
\includegraphics[width=10cm]{s-ComprDisk30-11.eps}
\end{center}
\caption{}
\label{fig:ComprDisk30-11}
\end{figure}

\begin{figure}[htbp]
\begin{center}
\includegraphics[width=10cm]{s-ComprDisk30-11b.eps}
\end{center}
\caption{}
\label{fig:ComprDisk30-11b}
\end{figure}
\begin{figure}[htbp]

\begin{center}
\includegraphics[width=10cm]{s-ComprDisk30-11c.eps}
\end{center}
\caption{}
\label{fig:ComprDisk30-11c}
\end{figure}

\begin{figure}[htbp]
\begin{center}
\includegraphics[width=10cm]{s-ComprDisk30-11d.eps}
\end{center}
\caption{}
\label{fig:ComprDisk30-11d}
\end{figure}

 We consider the $(30,11)$-lens space,
where ${\bf h}$ has five compressing disks,
and compression along them yields
the normal surface
\newline
${\bf h} - {\bf t}_1 - {\bf t}_3 - {\bf t}_5 - {\bf t}_7 -{\bf t}_9$
\newline
$= 
{}^t (0 \ 0 \ 0 | 0 \ 0 \ 0 | 0 \ 0 \ 0 | 0 \ 0 \ 0 | 0 \ 0 \ 0 |
0 \ 0 \ 0 | 0 \ 0 \ 0 | 0 \ 0 \ 0 | 0 \ 0 \ 0 | 0 \ 0 \ 1 |
0 \ 0 \ 0 | 0 \ 0 \ 0 | 0 \ 0 \ 0 | 0 \ 0 \ 0 | 0 \ 0 \ 0 |$
\newline
\hspace*{1cm}
$0 \ 0 \ 0 | 0 \ 0 \ 0 | 0 \ 0 \ 0 | 0 \ 0 \ 0 | 0 \ 0 \ 0 |
0 \ 0 \ 1 | 0 \ 1 \ 0 | 0 \ 0 \ 1 | 0 \ 1 \ 0 | 0 \ 0 \ 1 |
0 \ 1 \ 0 | 0 \ 0 \ 1 | 0 \ 1 \ 0 | 0 \ 0 \ 1 | 0 \ 0 \ 0 )$.
\newline
 This represents a non-orientable surface
which is the connected sum of five projective planes.
 This Q-fundamental surface is, however,
not of maximal Euler characteristic.
 In fact, this surface admits a compressing disk 
as in Figures \ref{fig:ComprDisk30-11} and \ref{fig:ComprDisk30-11b}.
 Surgering along this disk,
we obtain the Q-fundamental surface
\newline
${}^t (0 \ 0 \ 0 | 0 \ 0 \ 0 | 1 \ 0 \ 0 | 1 \ 0 \ 0 | 0 \ 0 \ 0 |
0 \ 0 \ 0 | 0 \ 0 \ 0 | 0 \ 0 \ 0 | 0 \ 0 \ 0 | 0 \ 0 \ 1 |
0 \ 0 \ 0 | 0 \ 0 \ 0 | 0 \ 0 \ 0 | 1 \ 0 \ 0 | 1 \ 0 \ 0 |$
\newline
$0 \ 0 \ 0 | 0 \ 0 \ 0 | 0 \ 0 \ 0 | 0 \ 0 \ 0 | 0 \ 0 \ 0 |
0 \ 0 \ 1 | 0 \ 0 \ 0 | 0 \ 0 \ 0 | 0 \ 1 \ 0 | 0 \ 0 \ 0 |
0 \ 0 \ 0 | 0 \ 0 \ 1 | 0 \ 1 \ 0 | 0 \ 0 \ 1 | 0 \ 0 \ 0 )$
\newline
with four $a_k$'s non-zero.
 This surface is the connected sum of three projective planes
and of maximal Euler characteristic
as in \cite{BW}.
 Such a surface is also obtained
by another compression along the disk 
in Figures \ref{fig:ComprDisk30-11c} and \ref{fig:ComprDisk30-11d}.
 The result is the Q-fundamental surface
\newline
${}^t (0 \ 0 \ 1 | 0 \ 0 \ 0 | 0 \ 0 \ 0 | 0 \ 0 \ 0 | 0 \ 0 \ 0 |
0 \ 0 \ 0 | 1 \ 0 \ 0 | 1 \ 0 \ 0 | 0 \ 0 \ 0 | 0 \ 0 \ 0 |
0 \ 0 \ 0 | 0 \ 1 \ 0 | 0 \ 0 \ 0 | 0 \ 0 \ 0 | 0 \ 0 \ 0 |$
\newline
$0 \ 0 \ 0 | 0 \ 0 \ 0 | 1 \ 0 \ 0 | 1 \ 0 \ 0 | 0 \ 0 \ 0 |
0 \ 0 \ 0 | 0 \ 0 \ 0 | 0 \ 0 \ 1 | 0 \ 1 \ 0 | 0 \ 0 \ 1 |
0 \ 0 \ 0 | 0 \ 0 \ 0 | 0 \ 1 \ 0 | 0 \ 0 \ 0 | 0 \ 0 \ 0 )$.

 The $(418, 153)$-lens space contains
a fundamental surface as below
which is homeomorphis to the connected sum of five projective planes
and has three sheets of normal disks of type $X_{k1}$ for four $k$'s.
\newpage
\noindent
${}^t(0, 0, 0 |
  0, 0, 0|
  0, 0, 0|
  1, 0, 0|
  1, 0, 0|
  0, 0, 0|
  0, 0, 0|
  0, 0, 0|
  0, 0, 0|
  0, 0, 0|
  0, 0, 0|
  1, 0, 0|
  1, 0, 0|
  1, 0, 0|
  2, 0, 0|$
\newline 
$2, 0, 0|
  1, 0, 0|
  1, 0, 0|
  1, 0, 0|
  1, 0, 0|
  1, 0, 0|
  0, 0, 0|
  0, 0, 0|
  0, 0, 0|
  0, 0, 0|
  1, 0, 0|
  1, 0, 0|
  0, 0, 0|
  0, 0, 0|
  0, 0, 0|$
\newline
$0, 0, 0|
  0, 0, 0|
  0, 0, 0|
  1, 0, 0|
  1, 0, 0|
  0, 0, 0|
  0, 0, 0|
  0, 0, 0|
  0, 0, 0|
  0, 0, 0|
  0, 0, 0|
  1, 0, 0|
  1, 0, 0|
  1, 0, 0|
  2, 0, 0|$
\newline
$2, 0, 0|
  1, 0, 0|
  1, 0, 0|
  1, 0, 0|
  1, 0, 0|
  1, 0, 0|
  1, 0, 0|
  2, 0, 0|
  2, 0, 0|
  2, 0, 0|
  3, 0, 0|
  3, 0, 0|
  2, 0, 0|
  2, 0, 0|
  2, 0, 0|$
\newline
$2, 0, 0|
  2, 0, 0|
  1, 0, 0|
  1, 0, 0|
  1, 0, 0|
  1, 0, 0|
  2, 0, 0|
  2, 0, 0|
  1, 0, 0|
  1, 0, 0|
  1, 0, 0|
  1, 0, 0|
  1, 0, 0|
  1, 0, 0|
  2, 0, 0|$
\newline
$2, 0, 0|
  1, 0, 0|
  1, 0, 0|
  1, 0, 0|
  1, 0, 0|
  1, 0, 0|
  0, 0, 0|
  0, 0, 0|
  0, 0, 0|
  0, 0, 0|
  1, 0, 0|
  1, 0, 0|
  0, 0, 0|
  0, 0, 0|
  0, 0, 0|$
\newline
$0, 0, 0|
  0, 0, 0|
  0, 0, 0|
  1, 0, 0|
  1, 0, 0|
  1, 0, 0|
  2, 0, 0|
  2, 0, 0|
  1, 0, 0|
  1, 0, 0|
  1, 0, 0|
  1, 0, 0|
  1, 0, 0|
  0, 0, 0|
  0, 0, 0|$
\newline
$0, 0, 0|
  0, 0, 0|
  1, 0, 0|
  1, 0, 0|
  0, 0, 0|
  0, 0, 0|
  0, 0, 0|
  0, 0, 0|
  0, 0, 0|
  0, 0, 0|
  1, 0, 0|
  1, 0, 0|
  0, 0, 0|
  0, 0, 0|
  0, 0, 0|$
\newline
$0, 0, 0|
  0, 0, 0|
  0, 0, 0|
  1, 0, 0|
  1, 0, 0|
  1, 0, 0|
  2, 0, 0|
  2, 0, 0|
  1, 0, 0|
  1, 0, 0|
  1, 0, 0|
  1, 0, 0|
  1, 0, 0|
  0, 0, 0|
  0, 0, 0|$
\newline
$0, 0, 0|
  0, 0, 0|
  1, 0, 0|
  1, 0, 0|
  0, 0, 0|
  0, 0, 0|
  0, 0, 0|
  0, 0, 0|
  0, 0, 0|
  0, 0, 0|
  1, 0, 0|
  1, 0, 0|
  0, 0, 0|
  0, 0, 0|
  0, 0, 0|$
\newline
$0, 0, 0|
  0, 0, 0|
  0, 1, 0|
  0, 0, 0|
  0, 0, 0|
  0, 0, 0|
  1, 0, 0|
  1, 0, 0|
  0, 0, 0|
  0, 0, 0|
  0, 0, 0|
  0, 0, 0|
  0, 0, 0|
  0, 0, 0|
  1, 0, 0|$
\newline
$1, 0, 0|
  1, 0, 0|
  2, 0, 0|
  2, 0, 0|
  1, 0, 0|
  1, 0, 0|
  1, 0, 0|
  1, 0, 0|
  1, 0, 0|
  0, 0, 0|
  0, 0, 0|
  0, 0, 0|
  0, 0, 0|
  1, 0, 0|
  1, 0, 0|$
\newline
$ 0, 0, 0|
  0, 0, 0|
  0, 0, 0|
  0, 0, 0|
  0, 0, 0|
  0, 0, 0|
  1, 0, 0|
  1, 0, 0|
  0, 0, 0|
  0, 0, 0|
  0, 0, 0|
  0, 0, 0|
  0, 0, 0|
  0, 0, 0|
  1, 0, 0|$
\newline
$1, 0, 0|
  1, 0, 0|
  2, 0, 0|
  2, 0, 0|
  1, 0, 0|
  1, 0, 0|
  1, 0, 0|
  1, 0, 0|
  1, 0, 0|
  1, 0, 0|
  2, 0, 0|
  2, 0, 0|
  2, 0, 0|
  3, 0, 0|
  3, 0, 0|$
\newline
$2, 0, 0|
  2, 0, 0|
  2, 0, 0|
  2, 0, 0|
  2, 0, 0|
  1, 0, 0|
  1, 0, 0|
  1, 0, 0|
  1, 0, 0|
  2, 0, 0|
  2, 0, 0|
  1, 0, 0|
  1, 0, 0|
  1, 0, 0|
  1, 0, 0|$
\newline
$1, 0, 0|
  1, 0, 0|
  2, 0, 0|
  2, 0, 0|
  1, 0, 0|
  1, 0, 0|
  1, 0, 0|
  1, 0, 0|
  1, 0, 0|
  0, 0, 0|
  0, 0, 0|
  0, 0, 0|
  0, 0, 0|
  1, 0, 0|
  1, 0, 0|$
\newline
$0, 0, 0|
  0, 0, 0|
  0, 0, 0|
  0, 0, 0|
  0, 0, 0|
  0, 0, 0|
  1, 0, 0|
  1, 0, 0|
  1, 0, 0|
  2, 0, 0|
  2, 0, 0|
  1, 0, 0|
  1, 0, 0|
  1, 0, 0|
  1, 0, 0|$
\newline
$1, 0, 0|
  0, 0, 0|
  0, 0, 0|
  0, 0, 0|
  0, 0, 0|
  1, 0, 0|
  1, 0, 0|
  0, 0, 0|
  0, 0, 0|
  0, 0, 0|
  0, 0, 0|
  0, 0, 0|
  0, 0, 0|
  1, 0, 0|
  1, 0, 0|$
\newline
$0, 0, 0|
  0, 0, 0|
  0, 0, 0|
  0, 0, 0|
  0, 0, 0|
  0, 0, 0|
  1, 0, 0|
  1, 0, 0|
  1, 0, 0|
  2, 0, 0|
  2, 0, 0|
  1, 0, 0|
  1, 0, 0|
  1, 0, 0|
  1, 0, 0|$
\newline
$1, 0, 0|
  0, 0, 0|
  0, 0, 0|
  0, 0, 0|
  0, 0, 0|
  1, 0, 0|
  1, 0, 0|
  0, 0, 0|
  0, 0, 0|
  0, 0, 0|
  0, 0, 0|
  0, 0, 0|
  0, 0, 0|
  1, 0, 0|
  1, 0, 0|$
\newline
$0, 0, 0|
  0, 0, 0|
  0, 0, 0|
  0, 0, 0|
  0, 0, 0|
  0, 0, 1|
  0, 0, 0|
  0, 0, 0|
  0, 0, 0|
  1, 0, 0|
  1, 0, 0|
  0, 0, 0|
  0, 0, 0|
  0, 0, 0|
  0, 0, 0|$
\newline
$0, 0, 0|
  0, 0, 0|
  1, 0, 0|
  1, 0, 0|
  1, 0, 0|
  2, 0, 0|
  2, 0, 0|
  1, 0, 0|
  1, 0, 0|
  1, 0, 0|
  1, 0, 0|
  1, 0, 0|
  0, 0, 0|
  0, 0, 0|
  0, 0, 0|$
\newline
$0, 0, 0|
  1, 0, 0|
  1, 0, 0|
  0, 0, 0|
  0, 0, 0|
  0, 0, 0|
  0, 0, 0|
  0, 0, 0|
  0, 0, 0|
  1, 0, 0|
  1, 0, 0|
  0, 0, 0|
  0, 0, 0|
  0, 0, 0|
  0, 0, 0|$
\newline
$0, 0, 0|
  0, 1, 0|
  0, 0, 0|
  0, 0, 0|
  0, 0, 0|
  1, 0, 0|
  1, 0, 0|
  0, 0, 0|
  0, 0, 0|
  0, 0, 0|
  0, 0, 0|
  0, 0, 0|
  0, 0, 0|
  1, 0, 0|
  1, 0, 0|$
\newline
$1, 0, 0|
  2, 0, 0|
  2, 0, 0|
  1, 0, 0|
  1, 0, 0|
  1, 0, 0|
  1, 0, 0|
  1, 0, 0|
  0, 0, 0|
  0, 0, 0|
  0, 0, 0|
  0, 0, 0|
  1, 0, 0|
  1, 0, 0|
  0, 0, 0|$
\newline
$0, 0, 0|
  0, 0, 0|
  0, 0, 0|
  0, 0, 0|
  0, 0, 0|
  1, 0, 0|
  1, 0, 0|
  0, 0, 0|
  0, 0, 0|
  0, 0, 0|
  0, 0, 0|
  0, 0, 0|
  0, 0, 1|
  0, 0, 0|
  0, 0, 0|$
\newline
$0, 0, 0|
  1, 0, 0|
  1, 0, 0|
  0, 0, 0|
  0, 0, 0|
  0, 0, 0|
  0, 0, 0|
  0, 0, 0|
  0, 1, 0|
  0, 0, 0|
  0, 0, 0|
  0, 0, 0|
  1, 0, 0|
  1, 0, 0|
  0, 0, 0|$
\newline
$0, 0, 0|
  0, 0, 0|
  0, 0, 0|
  0, 0, 0|
  0, 0, 1|
  0, 0, 0|
  0, 0, 0|
  0, 1, 0|
  0, 0, 0|
  0, 0, 0|
  0, 0, 1|
  0, 1, 0|
  0, 0, 1)$


\bibliographystyle{amsplain}

\begin{thebibliography}{30}

\bibitem{AMR} I. R. Aitchison, S. Matsumoto, J. H. Rubinstein,
\textit{Surfaces in the figure-$8$ knot complement.}
J. Knot Theory Ramifications 7 (1998), 1005--1025.

\bibitem{BW} G. E. Bredon and J. W. Wood, 
\textit{Non-orientable surfaces in orientable $3$-manifolds}
Invent. Math. 7 (1969), 83--110.

\bibitem{B} B. A. Burton,
\textit{Introducing Regina, The $3$-manifold Topology Software.}
Experiment. Math. 13 (2004), 267--272.

\bibitem{F} E. A. Fominykh,
\textit{A complete description of normal surfaces
for infinite series of $3$-manifolds.}
Seiberian Math. J. 43 (2002), 1112--1123.

\bibitem{H} W. Haken,
\textit{Theorie der Normal fl\"{a}chen. (German)}
Acta Math. 105 (1961), 245--375.

\bibitem{HL} J. Hass and J. Lagarias,
\textit{The number of Reidemeister moves needed for unknotting.}
J. Amer. Math. Soc. 14 (2001), 399--428.

\bibitem{IH} M. Iwakura and C. Hayashi,
\textit{Non-orientable fundamental surfaces in lens spaces.}
in preparation.

\bibitem{JO} W. Jaco and U. Oertel,
\textit{An algorithm to decide if a $3$-manifold is a Haken manifold.}
Topology 23 (1984), 195--209.

\bibitem{JT} W. Jaco and J. L. Tollefson,
\textit{Algorithms for the complete decomposition of a closed $3$-manifolds.}
Illinois J. Math. 39 (1995), 358--406.

\bibitem{Ka} E. Kang,
\textit{Normal surfaces in the figure-8 knot complement.}
J. Knot Theory Ramifications 12 (2003), 269--279.

\bibitem{K} H. Kneser,
\textit{Geschlossene Flachen in dreidimensionalen Manigfaltigkeiten.}
Jahrresbericht der Deut. Math. Verein. 38 (1929), 248--260.

\bibitem{L} M. Lackenby,
\textit{The volume of hyperbolic alternating link complements.}
Proc. London Math. Soc. 88 (2004), 204--224.

\bibitem{MR} S. Matsumoto and R. Rannard
\textit{The regular projective solution space
of the figure-eight knot complement.}
Experiment. Math. 9 (2000), 221--234

\bibitem{R} R. Rannard,
\textit{Computing immersed normal surfaces in the figure-eight knot complement.}
Experiment. Math. 8 (1999), 73--84.

\bibitem{S} M. Stocking,
\textit{Almost normal surfaces in $3$-manifolds.}
Trans. Amer. Math. Soc. 352 (1999), 171--207.

\bibitem{T} J. L. Tollefson,
\textit{Normal surface Q-theory.}
Pacific J. Math. 183, No.2 (1998), 359--374.


\end{thebibliography}

\medskip

\noindent
Miwa Iwakura, Chuichiro Hayashi:
Department of Mathematical and Physical Sciences,
Faculty of Science, Japan Women's University,
2-8-1 Mejirodai, Bunkyo-ku, Tokyo, 112-8681, Japan.
hayashic@fc.jwu.ac.jp

\end{document}